\def\subparagraph{\@startsection{paragraph}{5}{2\parindent}{0ex plus 0.1ex minus 0.1ex}%
{0ex}{\normalfont\small\itshape}}%
\def\URL#1#2{\@ifundefined{href}{#2}{\href{#1}{#2}}}
\def\UrlOrds{\do\*\do\-\do\~\do\'\do\"\do\-}%
\g@addto@macro{\UrlBreaks}{\UrlOrds}
\renewenvironment{abstract}
	{\trivlist\item[]\leftskip0pt\par\vskip4pt\noindent
  	\textbf{\abstractname}\mbox{\null}\\}
	{\par\noindent\endtrivlist}
\def\keywords#1{\par\medskip\par\noindent\textbf{Keywords}: #1\par}
\date{} \emergencystretch 8pt
\def\author#1{\gdef\@author{\hskip-\tabcolsep%
	\parbox{\textwidth}{\raggedright\bfseries#1\\[1pc]}}}
\def\address[#1]#2{\g@addto@macro\@author{\\\hskip-\tabcolsep\parbox{\textwidth}{\raggedright%
	\normalsize\normalfont\textsuperscript{#1}#2}}}
\let\addresslink\textsuperscript
\def\correspondence#1{\g@addto@macro\@author{\\\hskip-\tabcolsep\parbox{\textwidth}{\raggedright%
	\vspace*{10pt}\normalsize\normalfont~\\#1~\\[12pt]}}}
\def\email#1{\g@addto@macro\@author{\\\hskip-\tabcolsep\parbox{\textwidth}{\raggedright%
	\normalsize\normalfont Emails: #1}}}
\def\title#1{\gdef\@title{\vspace*{-30pt}%
	\raggedright\textbf{\@journaltitle}~\\%
  \raggedright\bfseries\ifx\@articleType\@empty\vspace*{20pt}\else%
  \vspace*{20pt}\@articleType\vspace*{20pt}\\\fi#1}}
\let\@journaltitle\@empty \def\journaltitle#1{\gdef\@journaltitle{{\normalfont\itshape#1}}}
\let\@articleType\@empty \def\articletype#1{\gdef\@articleType{{\normalfont\itshape#1}}}
\let\@runningHead\@empty \def\RunningHead#1{\gdef\@runningHead{{\normalfont #1}}}
\theoremstyle{plain}
\newtheorem{theorem}{Theorem}
\newtheorem{lemma}[theorem]{Lemma}
\newtheorem{corollary}[theorem]{Corollary}
\newtheorem{proposition}[theorem]{Proposition}
\newcommand{\RNum}[1]{\uppercase\expandafter{\romannumeral #1\relax}}
\theoremstyle{definition}
\newtheorem{definition}[theorem]{Definition}
\theoremstyle{remark}
\newtheorem{remark}{Remark}
\begin{document}

\title{Existence and Stability of $\alpha-$ harmonic Maps}

\author{%
		Seyed Mehdi Kazemi Torbaghan\addresslink{1},
  	 Keyvan Salehi\addresslink{2} and
 	Salman Babayi\addresslink{3}
  }
		
\address[1]{University of Bojnord,  Faculty of Basic Sciences, Department of Mathematics, Bojnord, Iran;}
\address[2]{Central of Theoretical Chemistry And Physics (CTCP), Massey University, Auckland, Newzealand;}
\address[3]{Department of Mathematics, Faculty of Sciences, Urmia University, Urmia, Iran.}

\correspondence{Correspondence should be addressed to 
  Seyed Mehdi Kazemi Torbaghan: m.kazemi@ub.ac.ir}

\email{Keyvan Salehi(K.salehi@massey.ac.nz),  Salman Babayi(s.babayi@urmia.ac.ir)}

\RunningHead{Existence and Stability of  $\alpha-$ Harmonic Maps}

\maketitle 

\begin{abstract} In this paper,  we  first study 
the $\alpha-$energy functional,  Euler-Lagrange operator and $\alpha$-stress energy tensor.   Second,  it is shown that  the critical points of $\alpha-$ energy functional  are explicitly related to harmonic maps through conformal deformation.   In addition,  an $\alpha-$harmonic map is constructed from any smooth map between Riemannian manifolds under certain assumptions.   Next,  we determine the  conditions under which the fibers of horizontally conformal $\alpha-$ harmonic maps are minimal submanifolds.   Then,  the stability of any $\alpha-$harmonic map from a Riemannian manifold  to a Riemannian manifold with non-positive Riemannian curvature is  demonstrated.  Finally,   the instability of $\alpha-$harmonic maps  from a compact manifold to a standard unit sphere is investigated. 
\keywords{Harmonic Maps; Calculus of Variations; Riemannian Geometry; Stability.}
\end{abstract}
\section{Introduction}
\noindent
Eells and Sampson introduced the concept of harmonic maps between Riemannian manifolds in 1964, \cite{js}.
If $\psi:(M,g )\longrightarrow (N,h)$ be a smooth map between Riemannian manifolds, Then the energy functional of $\psi $ is denoted by $E(\psi)$, and defined as follows 
\begin{equation}
E(\psi)=\frac{1}{2} \int _{M} \mid d\psi \mid ^2 dV _{g},
\end{equation}
where 
$dV_{g}$
is the volume element of $(M,g)$. 
The  Euler-Lagrange equation corresponding  to $E_{\psi},$ is given by
\begin{equation}\label{123er}
\tau (\psi):= Tr_{g } \,\nabla d \psi =0,
\end{equation}
where $\nabla $ is the induced connection on the pull-back bundle $\psi ^{-1}TN$. 
The section $\tau(\psi)\in \Gamma (\psi ^{-1}TN)$ described in \eqref{123er},  is known as the \textit{tension field} of $\psi$.\\
The authors showed in \cite{js},  that any smooth map  from a compact Riemannian manifold  to a Riemannian manifold  with non-positive sectional curvature can be deformed  into  a harmonic map.   This is well-known as the \textit{fundamental existence theorem for harmonic maps}.  Following that,  several researchers conducted a study on these maps, \cite{a1, h1,H, con11,z1}.  Harmonic maps have been examined in numerous theories in mathematical physics,  such as liquid crystal,  ferromagnetic material,  superconductors,  etc., \cite{hhhh,p}.\\
%

Sacks and Uhlenbeck introduced perturbed energy functional that satisfies the Palais-Smale condition in their pioneering paper \cite{542} in 1981 to prove the existence of harmonic maps from a closed surface, and thus obtained so-called $\alpha$-harmonic maps, as critical points of perturbed functional, to approximate harmonic maps.
For $\alpha >1$, the \textit{$\alpha-$ energy functional} of the map $\psi$ is denoted by $E_{\alpha}(\psi)$ and defined as follows:
\begin{equation}
E_{\alpha}(\psi):=\int _{M}(1+\mid d\psi \mid ^{2})^{\alpha}dV _{g},
\end{equation}
where
$\mid d\psi \mid^{2}$
 denotes the Hilbert-Schmidt norm of the differential map $ d\psi \in \Gamma (T^{*}M\times \psi^{-1}TN)$ 
 with respect to $g$ and $h$. Noting that, for $\alpha >1$,
$E_{\alpha}$ satisfies
Morse theory and Ljusternik- Schnirelman theory, \cite{s1}.  Furthermore, the critical points of $E_{\alpha}$ are said to be the  
\textit{$\alpha$- harmonic maps}. 
Sacks and Uhlenbeck developed an existence theory for harmonic mappings of orientable surfaces into Riemannian manifolds using the critical  maps  of $E_{\alpha}$. Furthermore, they demonstrated that the convergence of the critical points of $E_{\alpha}$ is sufficient to construct at least one harmonic map of the sphere into any Riemannian manifold, \cite{542}. Many studies have recently been conducted on these maps. For example, the authors of \cite{s2} investigated the convergence behaviour of a sequence of $\alpha-$harmonic mappings $u_{\alpha}$ with $E_{\alpha}(u_{\alpha})<C$ from a compact surface $(M,g)$ into a compact Riemannian manifold $(N,h)$ without boundary.  It is worth noticing that this sequence  converge weakly to a harmonic map.  Furthermore,  in \cite{s1},  it is studied   the energy identity and necklessness for a sequence  of Sacks-Uhlenbeck maps during blowing up.
\\

Following the concepts of \cite{ara,Dj,js,s2, s1, H,p, 542}, we study the stability, existence, and structure of $\alpha$-harmonic maps, as well as their practical applications, in this work. The existence of $\alpha-$harmonic maps in an arbitrary class of homotopy,  and  the conditions under which the fibers of $\alpha-$harmonic maps are minimal submanifolds,  are explored in particular. Furthermore,  the stability of harmonic mappings from a Riemannian manifold to a non-positive Riemannian curvature and unit standard sphere is explored.
Sections 3-7 present our key findings. \\
This manuscript is organized as follows. Section 2 investigates the concepts of $\alpha-$ energy functional and $\alpha-$ harmonic maps, and it provides an explicit relationship between $\alpha-$ harmonic maps and harmonic maps via conformal deformation. 
Section 3 looks at the existence of $\alpha-$harmonic mappings.  It is demonstrated that, under certain assumptions,  every smooth map between Riemannian manifolds can be deformed  into an $\alpha-$harmonic map. 
The $\alpha-$stress energy tensor and its practical applications are discussed in Section 4.   It is also showed that the vanishing of the divergence of the $\alpha-$stress energy tensor of $\psi$ is identical to the vanishing of the $\alpha-$hormonicity of $\psi$. The concept of horizontally conformal maps and their physical applications are revisited in Section 5.  The criteria that cause the fibers of horizontally conformal $\alpha-$ harmonic mappings to be minimal submanifolds are then examined. 
In Section 6, the Jacobi operator and Green's theorem are used to derive the second variation formula of the $\alpha-$energy functional. 
The stability of $\alpha-$harmonic maps is explored in the last part.  Any $\alpha-$harmonic map from a Riemannian manifold to a Riemannian manifold with non-positive curvature is proved to be stable in this sense. It is also explored the instability of $\alpha-$harmonic mappings from a compact manifold to a unit standard sphere.

\section{\large{$\alpha$- harmonic maps}} 
\noindent

In this section,  firstly we study the notion of $\alpha-$ harmonic maps.  Then,  the 
certain relation between 
$\alpha$- harmonic maps and harmonic maps through conformal deformation is given. 
\\
Let $\psi: (M, g)\longrightarrow (N, h)$ be a smooth map between Riemannian manifolds.
Throughout this paper, it is considered that $ (M,g) $ is a compact Riemannian manifold. Furthermore,  the Levi-Civita connections on $ M$ and $ N $ are denoted by $ \nabla^{M} $ and $ \nabla ^N $, respectively.
Moreover, the induced connection on  the pullback bundle $ \psi^{-1}TN $ is denoted 
 by 
 $\nabla $ and
 defined by 
$ \nabla _{Y}V= \nabla ^{N} _{d\psi (Y)}V,$ 
for any smooth vector field $Y\in \chi(M)$ and section $ V \in \Gamma(\psi^{-1}TN )$.\\
Let $\alpha$ be a positive constant such that $\alpha>1$. Then, 
The $\alpha-$energy functional of $\psi $ is defined as follows 
\begin{equation}\label{34rtz}
E_{\alpha}(\psi):=\int _{M}(1+\mid d\psi \mid ^{2})^{\alpha}dV _{g}.
\end{equation}
The critical points of $E_{\alpha}$ are called $\alpha-$harmonic maps. 
By Green's theorem,
the corresponding Euler-Lagrange equation of the $\alpha-$energy functional,     $E_{\alpha}$,  is given by
 \begin{align}\label{h1}
\tau _{\alpha}(\psi)&:=2\alpha(1+\mid d\psi\mid^{2})^{\alpha-1}\tau(\psi)\nonumber \\&+d\psi(grad(2\alpha(1+\mid d\psi\mid^{2})^{\alpha-1})\nonumber \\&=0.
\end{align}
The section $\tau _{\alpha}(\psi)\in \Gamma(\psi ^{-1}TN)$ is said to be the $\alpha -$ tension field of $\psi$. \\
By \eqref{123er} and \eqref{h1}, we get the following theorem. 
\begin{theorem}
Let $\psi: (M, g)\longrightarrow (N, h)$ be a smooth map between 
Riemannian manifolds. Then, $\psi$ is $\alpha-$harmonic if and only if it has a vanishing $\alpha-$tension field.

\end{theorem}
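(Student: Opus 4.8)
The statement asserts that the critical points of $E_{\alpha}$ are precisely the zeros of the section $\tau_{\alpha}(\psi)$ displayed in \eqref{h1}. The plan is therefore to derive the first variation formula of $E_{\alpha}$; once it is in the form $\frac{d}{dt}E_{\alpha}(\psi_t)|_{t=0}=-\int_{M}\langle\tau_{\alpha}(\psi),V\rangle_{h}\,dV_g$, the stated equivalence is immediate from the fundamental lemma of the calculus of variations. First I would fix a smooth one-parameter variation $\{\psi_t\}$ with $\psi_0=\psi$ and variation field $V=\frac{\partial\psi_t}{\partial t}\big|_{t=0}\in\Gamma(\psi^{-1}TN)$, and differentiate under the integral sign to obtain
\begin{equation*}
\frac{d}{dt}E_{\alpha}(\psi_t)\Big|_{t=0}=\int_{M}\alpha\,(1+\mid d\psi\mid^{2})^{\alpha-1}\,\frac{\partial}{\partial t}\mid d\psi_t\mid^{2}\Big|_{t=0}\,dV_g.
\end{equation*}

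The next step is to rewrite the energy density derivative. Writing $\mid d\psi_t\mid^{2}=\sum_i\langle d\psi_t(e_i),d\psi_t(e_i)\rangle_{h}$ for a local $g$-orthonormal frame $\{e_i\}$, I would interchange the $t$- and $e_i$-derivatives: since the pullback connection $\nabla$ is induced by the Levi-Civita connections and $[\partial_t,e_i]=0$, the symmetry $\nabla_{\partial_t}d\psi_t(e_i)=\nabla_{e_i}V$ holds at $t=0$, which yields $\frac{\partial}{\partial t}\mid d\psi_t\mid^{2}\big|_{t=0}=2\sum_i\langle\nabla_{e_i}V,d\psi(e_i)\rangle_{h}$.

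The main work is the integration by parts. Setting $\phi:=2\alpha(1+\mid d\psi\mid^{2})^{\alpha-1}$ and introducing the vector field $Z\in\chi(M)$ determined by $\langle Z,Y\rangle_{g}=\phi\,\langle d\psi(Y),V\rangle_{h}$, a direct computation of $\operatorname{div}Z=\sum_i\langle\nabla^{M}_{e_i}Z,e_i\rangle_{g}$ splits into three contributions: one carrying $e_i\phi$ and hence producing $\langle d\psi(grad\,\phi),V\rangle_{h}$, one carrying $(\nabla d\psi)(e_i,e_i)$ and hence producing $\phi\,\langle\tau(\psi),V\rangle_{h}$ through $Tr_g\nabla d\psi$, and the term $\phi\sum_i\langle d\psi(e_i),\nabla_{e_i}V\rangle_{h}$ already appearing in the first variation. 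Because $M$ is compact, Green's theorem gives $\int_{M}\operatorname{div}Z\,dV_g=0$, so rearranging produces
\begin{align*}
\frac{d}{dt}E_{\alpha}(\psi_t)\Big|_{t=0}&=-\int_{M}\big\langle\,\phi\,\tau(\psi)+d\psi(grad\,\phi),\,V\,\big\rangle_{h}\,dV_g\\
&=-\int_{M}\langle\tau_{\alpha}(\psi),V\rangle_{h}\,dV_g,
\end{align*}
the integrand being exactly the section \eqref{h1}.

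Finally, $\psi$ is $\alpha$-harmonic, i.e.\ a critical point of $E_{\alpha}$, if and only if this first variation vanishes for every variation field $V$; by the fundamental lemma of the calculus of variations this holds if and only if $\tau_{\alpha}(\psi)=0$, which is the claim. The step I expect to be most delicate is the bookkeeping in the integration by parts: one must check that the derivative falling on the conformal factor $\phi$ reproduces precisely the term $d\psi(grad(2\alpha(1+\mid d\psi\mid^{2})^{\alpha-1}))$ of \eqref{h1}, and the symmetry $\nabla_{\partial_t}d\psi_t(e_i)=\nabla_{e_i}V$ must be justified with care since $d\psi$ is a section of the mixed bundle $T^{*}M\otimes\psi^{-1}TN$.
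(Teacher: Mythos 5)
Your proposal is correct and follows essentially the same route as the paper: the paper simply asserts, ``by Green's theorem,'' that the Euler--Lagrange equation of $E_{\alpha}$ is $\tau_{\alpha}(\psi)=0$ as in \eqref{h1}, and then notes the theorem is immediate, whereas you have written out exactly that omitted first-variation computation (differentiation under the integral, the symmetry $\nabla_{\partial_t}d\psi_t(e_i)=\nabla_{e_i}V$, and the divergence-theorem integration by parts producing the $\phi\,\tau(\psi)$ and $d\psi(grad\,\phi)$ terms). The details you supply are sound, given the paper's standing assumption that $(M,g)$ is compact.
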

\begin{definition}
Let 
$\psi:(M,g)\longrightarrow (N,h)$ be a smooth map between Riemannian manifolds. It is called that 
$\varphi$ is non-degenerate if the induced tangent map 
$\psi_{\ast}=d\psi$
is non-degenerate, i.e.  
$ker d\psi=\emptyset. $
\end{definition}
Now an explicit relation between 
$\alpha-$ harmonic maps and harmonic maps through conformal deformation is given. \\

\begin{proposition}\label{j23}
Let 
$\psi:(M^{m},g)\longrightarrow (N^{n},h)$ 
be a non-degenerate smooth map with
$m>2.$
Then, $\psi$ is an $\alpha-$ harmonic map if and only if the map 
$\psi$ is harmonic with respect to the conformally related metric 
$\bar{g}$
defined by 
\begin{equation}
\bar{g}=\{(2\alpha)^{\dfrac{2}{m-2}}(1+\mid d\psi\mid^{2})^{\dfrac{2\alpha-2}{m-2}}\}g.
\end{equation}
\end{proposition}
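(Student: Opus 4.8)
The plan is to reduce the statement to the classical transformation law for the tension field under a conformal change of the domain metric. Writing the prescribed metric as $\bar{g}=e^{2\lambda}g$, I first read the conformal factor off the displayed formula, namely
$$2\lambda=\frac{2}{m-2}\ln(2\alpha)+\frac{2\alpha-2}{m-2}\ln(1+|d\psi|^{2}),$$
so that $(m-2)\lambda=\ln\!\big(2\alpha(1+|d\psi|^{2})^{\alpha-1}\big)$ and hence $e^{(m-2)\lambda}=2\alpha(1+|d\psi|^{2})^{\alpha-1}$. Since $1+|d\psi|^{2}\geq 1$, the factor $e^{2\lambda}$ is smooth and strictly positive, so $\bar{g}$ is a genuine Riemannian metric, and the non-degeneracy of $\psi$ is what makes this conformal deformation geometrically meaningful. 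The hypothesis $m>2$ enters precisely here, allowing division by $m-2$ to solve for $\lambda$.

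The key ingredient is the behaviour of the tension field under $\bar{g}=e^{2\lambda}g$ on an $m$-dimensional domain. Since only the domain metric changes, a direct computation using the transformation rule for the Christoffel symbols of $M$ gives
$$\tau_{\bar{g}}(\psi)=e^{-2\lambda}\Big(\tau(\psi)+(m-2)\,d\psi(\operatorname{grad}_{g}\lambda)\Big),$$
where $\tau(\psi)$ is the tension field with respect to $g$. I would either cite this standard identity or reproduce the short coordinate derivation; the only point demanding care is the exact coefficient $m-2$ multiplying the gradient term, which is what ultimately matches the structure of \eqref{h1}.

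Next I would match the gradient term with the second summand of the $\alpha$-tension field in \eqref{h1}. Taking the gradient of $(m-2)\lambda=\ln\!\big(2\alpha(1+|d\psi|^{2})^{\alpha-1}\big)$ yields
$$(m-2)\operatorname{grad}_{g}\lambda=\frac{\operatorname{grad}\big(2\alpha(1+|d\psi|^{2})^{\alpha-1}\big)}{2\alpha(1+|d\psi|^{2})^{\alpha-1}},$$
so multiplying the transformation law by the positive function $2\alpha(1+|d\psi|^{2})^{\alpha-1}e^{2\lambda}$ produces exactly
\begin{align*}
2\alpha(1+|d\psi|^{2})^{\alpha-1}e^{2\lambda}\,\tau_{\bar{g}}(\psi)
&=2\alpha(1+|d\psi|^{2})^{\alpha-1}\tau(\psi)\\
&\quad+d\psi\Big(\operatorname{grad}\big(2\alpha(1+|d\psi|^{2})^{\alpha-1}\big)\Big)\\
&=\tau_{\alpha}(\psi).
\end{align*}
Thus $\tau_{\alpha}(\psi)$ and $\tau_{\bar{g}}(\psi)$ differ by the strictly positive scalar $2\alpha(1+|d\psi|^{2})^{\alpha-1}e^{2\lambda}$, so one vanishes if and only if the other does. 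Combined with the characterisation of $\alpha$-harmonic maps as those with vanishing $\alpha$-tension field, this is the desired equivalence between $\alpha$-harmonicity and harmonicity with respect to $\bar{g}$.

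I expect the main obstacle to be bookkeeping rather than anything conceptual: obtaining the conformal transformation law for the tension field with the correct $(m-2)$ factor, and then verifying that the logarithmic derivative of the conformal factor reproduces the gradient term of \eqref{h1} up to the right positive constant. The dimensional restriction $m>2$ is essential throughout; in dimension two the $(m-2)$ term disappears, harmonicity is conformally invariant, and no such conformal deformation could encode the $\alpha$-tension field.
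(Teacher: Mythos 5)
Your proposal is correct and follows essentially the same route as the paper: both rest on the conformal transformation law for the tension field (your $e^{2\lambda}$ is the paper's $\mu^{2}$, and your identity $\tau_{\bar g}(\psi)=e^{-2\lambda}\bigl(\tau(\psi)+(m-2)\,d\psi(\operatorname{grad}_g\lambda)\bigr)$ is exactly the paper's equation \eqref{345rt} after the substitution $\mu^{m-2}=2\alpha(1+\mid d\psi\mid^{2})^{\alpha-1}$), and both conclude by observing that $\tau_{\alpha}(\psi)$ equals $\tau_{\bar g}(\psi)$ times the strictly positive factor $\mu^{m}=2\alpha(1+\mid d\psi\mid^{2})^{\alpha-1}e^{2\lambda}$. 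The only cosmetic difference is your logarithmic bookkeeping for the conformal exponent, which neither adds nor loses anything.
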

\begin{proof}
Assume that
$\bar{g}$
be a Riemannian metric for some smooth positive function $\mu$ on $M$ conformally associated to 
$g$
by 
$\bar{g}=\mu^{2} g.$
Denote the tension fields of the smooth map $\psi$ with respect to 
$g$
and  
$\bar{g}$
by 
$\tau(\psi)$
and 
$\bar{\tau}(\psi)$, 
respectively.  By 
\eqref{123er}, it can be obtained that 
\begin{equation} \label{345rt}
\bar{\tau}(\psi)=\dfrac{1}{\mu ^{m}}\{\mu ^{m-2}\tau(\psi)+ d\psi (grad \mu ^{m-2})\}. 
\end{equation}
Setting 
\begin{equation}\label{r5}
\mu =\{(2\alpha)^{\dfrac{1}{m-2}}(1+\mid d\psi\mid^{2})^{\dfrac{\alpha-1}{m-2}}\}
\end{equation}

By \eqref{h1}, \eqref{345rt} and \eqref{r5}, we get 
\begin{align}\label{345zh}
&\mu ^{m}\bar{\tau}(\psi)\nonumber \\&=\mu ^{m-2}\tau(\psi)+ d\psi (grad \mu ^{m-2})\nonumber \\ &=2\alpha(1+\mid d\psi\mid^{2})^{\alpha-1}
+ d\psi (grad(2\alpha(1+\mid d\psi\mid^{2})^{\alpha-1}) )
\nonumber \\&=\tau_{\alpha}(\psi).
\end{align}
The proposition \ref{j23} follows from 
\eqref{345zh}. 
\end{proof}

\section{The Existence of $\alpha-$ harmonic map}
In this section, it is assumed that 
$(M,g)$
and 
$(N,h)$
are compact Riemannian manifolds and $\mathcal{H}$
is a homotopy class of a smooth given map 
$\psi:(M^{m},g)\longrightarrow (N^{n},h)$. 
The following theorem is due to Hong.
\begin{theorem}\cite{H} \label{jk}
Suppose that 
$\psi:(M, g)\longrightarrow (N ,h)$
is a harmonic map. Then, for any $\hat{\varepsilon}>0$, there exists a smooth metric $\hat{g}$ conformally equivalent to $g$ such that $\psi:(M,\hat{g})\longrightarrow (N,h)$ is harmonic and $\mid d\psi \mid _{\hat{g}}\leq \hat{\varepsilon}$, where $\mid d\psi \mid _{\hat{g}}$ is the Hilbert-Schmidt norm with respect to $\hat{g}$ and $h$.  
\end{theorem}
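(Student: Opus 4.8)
The plan is to realize $\hat{g}$ as a conformal rescaling of $g$ and to monitor how harmonicity and the energy density respond to such a change, exactly as in the computation behind \eqref{345rt}. Writing $\hat{g}=\mu^{2}g$ for a smooth positive function $\mu$ on $M$, the transformation law \eqref{345rt} reads $\hat{\tau}(\psi)=\mu^{-m}\{\mu^{m-2}\tau(\psi)+d\psi(\mathrm{grad}\,\mu^{m-2})\}$, and a parallel computation for the Hilbert--Schmidt norm (a $\hat{g}$-orthonormal frame is $\mu^{-1}$ times a $g$-orthonormal one) gives $|d\psi|_{\hat{g}}=\mu^{-1}|d\psi|_{g}$. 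Thus shrinking the energy density forces $\mu$ to be large, while the first identity governs whether harmonicity survives the rescaling.

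First I would dispose of the problem with a constant factor $\mu\equiv c>0$. Then $\mathrm{grad}\,\mu^{m-2}=0$ and \eqref{345rt} collapses to $\hat{\tau}(\psi)=c^{-2}\tau(\psi)$, so $\tau(\psi)=0$ immediately yields $\hat{\tau}(\psi)=0$: the map $\psi:(M,c^{2}g)\to(N,h)$ remains harmonic in every dimension $m$. Since $M$ is compact and $\psi$ is smooth, the function $|d\psi|_{g}$ attains a finite maximum $C$, and any constant $c\geq C/\hat{\varepsilon}$ gives $|d\psi|_{\hat{g}}=c^{-1}|d\psi|_{g}\leq\hat{\varepsilon}$ pointwise on $M$. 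This already produces a metric $\hat{g}$ conformally equivalent to $g$ enjoying both required properties.

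Should a genuinely non-constant conformal factor be desired, the same law shows harmonicity is preserved precisely when $d\psi(\mathrm{grad}\,\mu^{m-2})=0$. In the surface case $m=2$ this term vanishes identically, which recovers the classical conformal invariance of harmonic maps from surfaces; here every $\mu$ is admissible, and one again drives $|d\psi|_{\hat{g}}=\mu^{-1}|d\psi|_{g}$ below $\hat{\varepsilon}$ by taking $\mu$ large.

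The only genuine obstacle is the coupling, visible in \eqref{345rt}, between the choice of $\mu$ and the preservation of harmonicity: a non-constant factor generally destroys $\tau(\psi)=0$ unless its gradient is annihilated by $d\psi$. I expect this to be the step requiring care, and the cleanest way to sidestep it is precisely the constant-rescaling argument above, which decouples the two demands and leaves the compactness of $M$ to supply the uniform bound.
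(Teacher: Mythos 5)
Your argument is correct, but note that the paper contains no proof of this statement to compare against: Theorem \ref{jk} is quoted as a known result (``The following theorem is due to Hong'') with the citation \cite{H}, and is then used as a black box. Measured against that, your proposal is a genuine, self-contained and more elementary proof of the statement as quoted. Its two key observations are both right: a homothety $\hat g = c^2 g$ (constant $c>0$) leaves the Levi-Civita connection of the domain unchanged, so $\nabla d\psi$ is unchanged and by \eqref{345rt} the tension field merely rescales, $\hat\tau(\psi)=c^{-2}\tau(\psi)$, preserving harmonicity in every dimension; and the pointwise Hilbert--Schmidt norm rescales as $|d\psi|_{\hat g}=c^{-1}|d\psi|_g$. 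Compactness of $M$ --- the paper's standing assumption, and genuinely indispensable here, since on a noncompact domain $\sup_M|d\psi|_g$ could be infinite and no constant factor would suffice --- supplies the uniform bound, and any $c>\max\{1,\,\max_M|d\psi|_g/\hat\varepsilon\}$ works (this formulation also covers the degenerate case $d\psi\equiv 0$, where your bound $c\geq C/\hat\varepsilon$ would read $c\geq 0$). Since a constant positive function is a perfectly legitimate conformal factor, this proves exactly what Theorem \ref{jk} asserts, and it is all that the application in Section 3 actually uses. What the homothety cannot do --- and where the substance of \cite{H} and of the paper's own Section 3 lies --- is produce a conformal change making $\psi$ critical for a \emph{different} functional (exponentially harmonic in \cite{H}, $\alpha$-harmonic here); that forces the non-constant factor $\mu=\theta(|d\psi|^2_{\hat g}/2)$ constructed after Theorem \ref{jk}, and it is precisely the coupling you flag in your final paragraph. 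So your proof is complete for the lemma as stated, while the harder, non-constant conformal deformation is correctly left to the downstream theorem.
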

By Theorem \ref{jk}, the following theorem follows.
\begin{theorem}
Let
$\psi \in \mathcal{H}$ and $\psi:(M^{m},g)\longrightarrow (N^{n},h)$
be a harmonic map. Then, there is a smooth metric $\bar{g}$ on $M$ conformally equivalent to $g$ such that $\psi:(M,\bar{g})\longrightarrow (N,h)$ is $\alpha-$harmonic if $m>2\alpha$.
\end{theorem}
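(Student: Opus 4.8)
The plan is to combine the conformal correspondence of Proposition~\ref{j23} with Hong's deformation result (Theorem~\ref{jk}), thereby reducing the whole problem to the pointwise solvability of a single scalar equation for the conformal factor. First I would invoke Theorem~\ref{jk}: since $\psi:(M,g)\longrightarrow(N,h)$ is harmonic, there is a metric $\hat g$ conformally equivalent to $g$ for which $\psi:(M,\hat g)\longrightarrow(N,h)$ is still harmonic and $\mid d\psi\mid_{\hat g}\leq\hat\varepsilon$, with $\hat\varepsilon>0$ at our disposal. A conformal change of metric does not alter $\psi$ as a map, so it remains in the homotopy class $\mathcal H$. The task then becomes: produce a metric $\bar g=\mu^{2}\hat g$, conformal to $\hat g$ (hence to $g$), with respect to which $\psi$ is $\alpha$-harmonic.

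Next I would run the correspondence of Proposition~\ref{j23} in reverse. Reading that proposition with $\bar g$ as the base metric, $\psi$ is $\alpha$-harmonic for $\bar g$ exactly when $\psi$ is harmonic for $(2\alpha)^{2/(m-2)}(1+\mid d\psi\mid_{\bar g}^{2})^{(2\alpha-2)/(m-2)}\bar g$. Since $\psi$ is already harmonic for $\hat g$, it suffices to choose $\mu$ so that this induced metric coincides with $\hat g$. Writing $\bar g=\mu^{2}\hat g$, using $\mid d\psi\mid_{\bar g}^{2}=\mu^{-2}\mid d\psi\mid_{\hat g}^{2}$, and abbreviating $a=\mid d\psi\mid_{\hat g}^{2}$ and $t=\mu^{2}$, the requirement collapses to the pointwise equation
\[
(2\alpha)^{\frac{2}{m-2}}\,(t+a)^{\frac{2\alpha-2}{m-2}}\,t^{\frac{m-2\alpha}{m-2}}=1 .
\]
Equivalently, one may bypass Proposition~\ref{j23} and derive the same equation straight from the conformal transformation law \eqref{345rt} for the tension field together with $\hat\tau(\psi)=0$; this second route has the advantage of not requiring non-degeneracy of $\psi$, which the statement does not assume.

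The crux is solving this equation for a smooth positive $t$ on all of $M$, and this is precisely where $m>2\alpha$ enters. Fix $a\geq0$ and set $\Phi(t)=(t+a)^{(2\alpha-2)/(m-2)}\,t^{(m-2\alpha)/(m-2)}$. Because $\alpha>1$ and $m>2$, the first exponent is positive; the decisive point is that $m>2\alpha$ forces the second exponent $(m-2\alpha)/(m-2)$ to be positive as well. Hence $\Phi$ is continuous and strictly increasing on $(0,\infty)$, with $\Phi(0^{+})=0$ and $\Phi(t)\to\infty$, so the equation $\Phi(t)=(2\alpha)^{-2/(m-2)}$ has a unique positive root $t=t(a)$. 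Since $\partial\Phi/\partial t\neq0$ there, the implicit function theorem yields $t(a)$ as a smooth function of $a$; as $a=\mid d\psi\mid_{\hat g}^{2}$ is smooth on the compact manifold $M$, the resulting $\mu=\sqrt{t}$ is smooth and strictly positive, and $\bar g=\mu^{2}\hat g$ is the desired $\alpha$-harmonic representative.

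I expect the main obstacle to be exactly this global solvability and smoothness of the conformal factor: everything hinges on the sign of the exponent $(m-2\alpha)/(m-2)$, which the monotonicity argument converts into existence and uniqueness. Hong's theorem plays a supporting role, since making $\mid d\psi\mid_{\hat g}$ small keeps $a$ small and $t(a)$ near the constant $(2\alpha)^{-2/(m-2)}$, so positivity and smoothness of $\mu$ are transparent. If instead $m\leq2\alpha$, the exponent $(m-2\alpha)/(m-2)$ becomes nonpositive, $\Phi$ no longer sweeps $(0,\infty)$ monotonically, and the construction fails—consistent with the hypothesis $m>2\alpha$ being the natural threshold for this method.
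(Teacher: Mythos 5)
Your proposal is correct, and its skeleton coincides with the paper's: both invoke Hong's theorem (Theorem \ref{jk}), both reduce the problem through the conformal transformation law \eqref{345rt} for the tension field (thereby avoiding the non-degeneracy hypothesis of Proposition \ref{j23}, a point you rightly flag), and both arrive at the same pointwise scalar equation for the conformal factor: your equation
\begin{equation*}
(2\alpha)^{\frac{2}{m-2}}(t+a)^{\frac{2\alpha-2}{m-2}}\,t^{\frac{m-2\alpha}{m-2}}=1
\end{equation*}
becomes, after the substitution $t=\mu^{-2}$, exactly the paper's condition \eqref{e9}, namely $\mu^{m-2}=2\alpha(1+\mu^{2}a)^{\alpha-1}$ with $a=\mid d\psi\mid_{\hat g}^{2}$. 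Where you genuinely diverge is in how this equation is solved. The paper sets $h(t)=(1+2t)^{\alpha}$, takes the inverse $k$ of $h'$, forms $y(u)=k(u^{m-2})/u^{2}$, verifies $dy/du\neq 0$ (this is where $m>2\alpha$ enters), and obtains the conformal factor as a \emph{local} inverse $\theta$ defined only on a small interval $[0,\varepsilon')$; Hong's theorem is therefore essential to that argument, since it is what guarantees $\mid d\psi\mid_{\hat g}^{2}/2<\varepsilon'$ so that $\theta$ can be evaluated at the energy density. Your argument instead solves the equation \emph{globally}: since both exponents of $\Phi$ are positive (precisely because $\alpha>1$ and $m>2\alpha$), $\Phi$ is a strictly increasing bijection of $(0,\infty)$ onto itself, so a unique positive root exists for \emph{every} $a\geq 0$ and depends smoothly on $a$ by the implicit function theorem. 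This buys you two things the paper's proof does not give: uniqueness of the conformal factor (given the normalization), and the observation—which you half-make when you call Hong's theorem a ``supporting'' ingredient—that Hong's theorem is actually dispensable in your route: the equation being solvable for arbitrary $a\geq 0$, you could take $\hat g=g$ and $a=\mid d\psi\mid_{g}^{2}$ directly and still produce $\bar g$. The paper's construction, by contrast, cannot drop Hong's theorem, because its inverse function $\theta$ exists only near zero.
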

\begin{proof}
Assume that 
$\varepsilon$ be a positive constant and  
$h(t)=(1+2t)^{\alpha}$.  
Let  
$k(t)=\dfrac{1}{2}\{(\dfrac{t}{2\alpha})^{\dfrac{1}{\alpha-1}}-1\}$
be the inverse function of 
$h'(t)$ on $[0,\varepsilon)$. Then, we have 
\begin{align}
h'(k(t))&=t,\label{e1} \\
k'(t)h''(k(t))&=1,\label{e2}
\end{align}
on $[0,\varepsilon)$. Setting 
\begin{equation}
y(u)=\dfrac{k(u^{m-2})}{u^{2}}.
\end{equation}
The derivative of $y$ with respect to $u$
can be obtained as follows
\begin{align}
\dfrac{dy}{du}&=\dfrac{1}{u^{3}}\{\dfrac{(m-2)u^{m-2}}{4\alpha(\alpha-1)}(\dfrac{u^{m-2}}{2\alpha})^{\dfrac{2-\alpha}{\alpha-1}}\nonumber \\&-((\dfrac{u^{m-2}}{2\alpha})^{\dfrac{1}{\alpha-1}}-1)\}.
\end{align}
Due to the fact that $k$ is the inverse function of $h'$ and $m>2\alpha$, we get 
\begin{align}
&\dfrac{dy}{du}((h'(t))^{\dfrac{1}{m-2}})\nonumber \\ &=\dfrac{\{(2m-4\alpha)t+(m-2)\}}{2(\alpha-1)\{2\alpha(1+2t)\}^{\dfrac{3(\alpha-1)}{m-2}}} \neq 0,
\end{align}
for $t\in [0,\varepsilon)$.
Then,  
$\dfrac{dy}{du}\neq 0, $   
on $u$ between $(h'(0))^{\dfrac{1}{m-2}}$ and $(h'(\varepsilon))^{\dfrac{1}{m-2}}$. 
Thus, there exists a smooth function $\theta(y)$ on $[0,\varepsilon')$ for some positive constant $\varepsilon'$, such that 
\begin{equation}\label{e4}
y=\dfrac{k((\theta(y))^{m-2})}{(\theta(y))^{2}}.
\end{equation}
Moreover, by \eqref{e1}, we have 
\begin{equation}\label{e5}
h'(k((\theta(y))^{m-2}))=(\theta(y))^{m-2}.
\end{equation}
By using \eqref{e4} and \eqref{e5}, we get 
\begin{equation}\label{e6}
h'(y(\theta(y))^{2})=(\theta(y))^{m-2}.
\end{equation}
Since $h'(t)$ is a positive function on $[0,\varepsilon')$,  by \eqref{e6}, it can be seen that $\theta$ is a positive function on $[0,\varepsilon')$.\\
Now,  by Theorem \ref{jk},  for  
$\hat{\varepsilon}: =\sqrt{\varepsilon'}$, there exists a smooth metric 
$\hat{g}$ conformally equivalent to $g$ such that $\psi:(M,\hat{g})\longrightarrow (N,h)$ is harmonic and 
 $\dfrac{\mid d\psi \mid _{\hat{g}}^{2}}{2}<\varepsilon'$.
  Due to the harmonicity of $\psi:(M,\hat{g})\longrightarrow (N,h)$,   the tension field of $\psi$ associated to the metrics $\hat{g}$ and $h$ vanishes,  
$\hat{\tau}(\psi)=Tr_{\hat{g}}d\psi=0$. Assume that 
$\bar{g}=\mu^{-2}\hat{g}$ for a smooth positive function $\mu :M\longrightarrow \mathbb{R}$, by \eqref{345rt}, we get 
\begin{align} \label{e7}
0&=\hat{\tau}(\psi)\nonumber \\&=\dfrac{1}{\mu ^{m}}\{\mu ^{m-2}\bar{\tau}(\psi)+ d\psi (grad_{\bar{g}} \mu ^{m-2})\}. 
\end{align}
Due to the fact that 
$\dfrac{\mid d\psi \mid _{\hat{g}}^{2}}{2}<\varepsilon'$,  the positive function $\mu$ can be defined as follows
\begin{equation}\label{e8}
\mu:=\theta(\dfrac{\mid d\psi \mid _{\hat{g}}^{2}}{2}).
\end{equation}
By \eqref{e6} and \eqref{e8}, it yields that 
\begin{align}\label{e9}
\mu ^{m-2}&=(\theta(\dfrac{\mid d\psi \mid _{\hat{g}}^{2}}{2}))^{m-2}=h'(\dfrac{\mid d\psi \mid _{\hat{g}}^{2}}{2}(\theta(\dfrac{\mid d\psi \mid _{\hat{g}}^{2}}{2}))^{2})
\nonumber \\
&=h'(\dfrac{\mid d\psi \mid _{\hat{g}}^{2}}{2}\mu ^{2})=h'(\dfrac{\mid d\psi \mid _{\bar{g}}^{2}}{2})\nonumber \\&=2\alpha(1+\mid d\psi \mid _{\bar{g}})^{\alpha-1}.
\end{align}
By substituting \eqref{e9} in \eqref{e7} and using 
\eqref{h1},  we get 
\begin{align}\label{345zu}
\bar{\tau}_{\alpha}(\psi) =&2\alpha(x)(1+\mid d\psi\mid^{2}_{\bar{g}})^{\alpha(x)-1}\bar{\tau}(\psi)\nonumber \\&+d\psi(grad_{\bar{g}}(2\alpha(x)(1+\mid d\psi\mid^{2}_{\bar{g}})^{\alpha(x)-1})\nonumber \\&=0,
\end{align}
and this completes the proof.  
\end{proof}
\section{The $\alpha-$stress energy tensor}
\noindent 
Let 
$\psi :(M^{m},g)\longrightarrow (N^{n},h)$ be a smooth map between Riemannian manifolds. The stress-energy tensor of $\psi$ associated with the $\alpha-$energy functional (in the sequel we call the $\alpha$-stress energy tensor of $\psi$,  in short) is denoted by 
$S_{\alpha}(\psi)$ and defined as follows
\begin{align}\label{4590}
S_{\alpha}(\psi)&=(1+\mid d\psi \mid ^{2})^{\alpha}g\nonumber \\& -2\alpha(1+\mid d\psi \mid ^{2})^{\alpha-1}\psi^{\ast}h.
\end{align}
\begin{remark}
The stress-energy tensor sometimes called the stress–energy-momentum tensor or the energy-momentum tensor is a tensor quantity in physics that describes the density and flux of energy and momentum in spacetime, generalizing the stress tensor of Newtonian physics. It is an attribute of matter, radiation, and non-gravitational force fields. This density and flux of energy and momentum are the sources of the gravitational field in the Einstein field equations of general relativity, just as mass density is the source of such a field in Newtonian gravity, \cite{h1}.\\
In general relativity, the symmetric stress-energy tensor acts as the source of spacetime curvature, and is the current density associated with gauge transformations of gravity which are general curvilinear coordinate transformations, \cite{m1}.  
\end{remark}

\begin{proposition}\label{14}
Based on the above notations, we get 
\begin{equation}
(div S_{\alpha}(\psi))(Y)=-h(\tau _{\alpha}(\psi), d\psi(Y)), 
\end{equation}
for all $Y\in \chi(M)$. 
\end{proposition}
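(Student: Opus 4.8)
The plan is to compute the divergence of $S_{\alpha}(\psi)$ directly in a suitable frame and then recognise the $\alpha$-tension field \eqref{h1} in the output. First I would fix a point $p\in M$ and choose a local orthonormal frame $\{e_i\}$ on $M$ that is geodesic at $p$, so that $\nabla^{M}_{X}e_i=0$ at $p$ for every $X$; since the quantity $(div S_{\alpha}(\psi))(Y)=\sum_i(\nabla_{e_i}S_{\alpha}(\psi))(e_i,Y)$ is tensorial in $Y$, evaluating at $p$ loses nothing. Abbreviating $\phi:=(1+\mid d\psi\mid^{2})^{\alpha}$ and $\lambda:=2\alpha(1+\mid d\psi\mid^{2})^{\alpha-1}$, the tensor reads $S_{\alpha}(\psi)=\phi\, g-\lambda\,\psi^{\ast}h$, where $(\psi^{\ast}h)(X,Y)=h(d\psi(X),d\psi(Y))$.

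For the first summand, metricity of the Levi-Civita connection ($\nabla g=0$) kills the tensor part, leaving only the derivative of the scalar, so $\sum_i(\nabla_{e_i}(\phi g))(e_i,Y)=Y(\phi)$. For the second summand I would apply the Leibniz rule, separating the term $\sum_i e_i(\lambda)(\psi^{\ast}h)(e_i,Y)$, which collapses to $h(d\psi(grad\,\lambda),d\psi(Y))$ using $grad\,\lambda=\sum_i e_i(\lambda)e_i$, from the genuine tensor derivative $\lambda\sum_i(\nabla_{e_i}(\psi^{\ast}h))(e_i,Y)$.

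The heart of the argument is this last term. Expanding at $p$ and using that $h$ is parallel along $\psi$ gives $\sum_i(\nabla_{e_i}(\psi^{\ast}h))(e_i,Y)=\sum_i h(\nabla_{e_i}(d\psi(e_i)),d\psi(Y))+\sum_i h(d\psi(e_i),(\nabla d\psi)(e_i,Y))$, where the $\nabla^{M}_{e_i}Y$ contributions reorganise precisely into the second fundamental form $(\nabla d\psi)(e_i,Y)=\nabla_{e_i}(d\psi(Y))-d\psi(\nabla^{M}_{e_i}Y)$. The first sum is $Tr_{g}\nabla d\psi=\tau(\psi)$ paired against $d\psi(Y)$, hence $h(\tau(\psi),d\psi(Y))$. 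For the second sum I would invoke the symmetry $(\nabla d\psi)(e_i,Y)=(\nabla d\psi)(Y,e_i)=\nabla_{Y}(d\psi(e_i))$ at $p$, which, together with $\mid d\psi\mid^{2}=\sum_i h(d\psi(e_i),d\psi(e_i))$, yields $\sum_i h(d\psi(e_i),\nabla_{Y}(d\psi(e_i)))=\tfrac{1}{2}Y(\mid d\psi\mid^{2})$. Collecting all pieces gives $(div S_{\alpha}(\psi))(Y)=Y(\phi)-h(d\psi(grad\,\lambda),d\psi(Y))-\lambda\,h(\tau(\psi),d\psi(Y))-\tfrac{\lambda}{2}Y(\mid d\psi\mid^{2})$.

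The final step is the decisive cancellation: since $Y(\phi)=\alpha(1+\mid d\psi\mid^{2})^{\alpha-1}Y(\mid d\psi\mid^{2})=\tfrac{\lambda}{2}Y(\mid d\psi\mid^{2})$, the two scalar terms annihilate each other, leaving $-h(\lambda\,\tau(\psi)+d\psi(grad\,\lambda),d\psi(Y))$. Comparing the bracket with \eqref{h1}, it is exactly $\tau_{\alpha}(\psi)$, which gives the claim. I expect the main obstacle to be the careful bookkeeping of the covariant derivative of the pullback metric $\psi^{\ast}h$ and the correct use of the symmetry of $\nabla d\psi$ at a geodesic frame; once the term $\tfrac{\lambda}{2}Y(\mid d\psi\mid^{2})$ is matched against $Y(\phi)$, the identification with $\tau_{\alpha}(\psi)$ is immediate.
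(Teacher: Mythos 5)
Your proof is correct and follows essentially the same route as the paper's: a normal frame at a point, the Leibniz rule applied to $S_{\alpha}(\psi)=\phi\,g-\lambda\,\psi^{\ast}h$, the symmetry of $\nabla d\psi$, and the cancellation of the scalar terms (your $Y(\phi)=\tfrac{\lambda}{2}Y(\mid d\psi\mid^{2})$ is exactly the paper's cancellation of the sums $\lambda\sum_i h(d\psi(e_i),\nabla_{Y}d\psi(e_i))$, just kept in unexpanded form). The only difference is bookkeeping notation, not substance.
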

\begin{proof}
Choose a local orthonormal frame 
$\{e_{i}\}_{i=1}^{m}$
on 
$M$ 
with 
$\nabla _{e_{i}}e_{j}\mid _{p}=0$,
at a point 
$p\in M$. 
Let $Y$ be a smooth vector field on 
$M$.  According to the definition of divergence operator on a Riemannian manifold, at point $p$, we have 
\begin{align}\label{1289}
&(div S_{\alpha}(\psi))(Y)
\nonumber \\
&=
-h(d\psi(grad (2
\alpha(1+\mid d\psi\mid ^{2})^{\alpha-1})), d\psi(Y))\nonumber \\
&+
2
\alpha(1+\mid d\psi\mid ^{2})^{\alpha-1}\sum_{j=1}^{m}\{
h(d\psi(e_{j}), \nabla _{Y}d\psi(e_{j}))\}
\nonumber \\
&-2
\alpha(1+\mid d\psi\mid ^{2})^{\alpha-1}\sum_{i=1}^{m}\{ h(\nabla _{e_{i}}d\psi(e_{i}), d\psi(Y))
\nonumber \\
&+
h( d\psi(e_{i}),
\nabla _{e_{i}}d\psi(Y)
-d\psi(\nabla ^{M}_{e_{i}}Y))\}. 
\end{align}
At point $p$,  
the last term of the right-hand side of \ref{1289}, can be obtained as follows 
\begin{align}\label{1290}
&\nabla _{e_{i}}d\psi(Y)-d\psi(\nabla^{M} _{e_{i}}Y)\nonumber \\
&=\nabla _{Y}d\psi(e_{i})+d\psi[e_{i},Y]-d\psi(\nabla _{e_{i}}^{M}Y)\nonumber \\
&=\nabla _{Y}d\psi(e_{i})+d\psi(\nabla ^{M}_{e_{i}}Y-\nabla ^{M}_{Y}e_{i})-d\psi(\nabla _{e_{i}}^{M}Y)\nonumber \\
&=\nabla _{Y}d\psi(e_{i}),
\end{align}
where we use
\begin{align}
\nabla ^{M}_{Y}e_{i}&=\sum_{j=1}^{m}\nabla ^{M}_{g(e_{j}, Y)e_{j}}e_{i}&\nonumber \\&=\sum_{j=1}^{m}g(e_{j}, Y)\nabla ^{M}_{e_{j}}e_{i}=0,
\end{align}
at the point $p$, for the third equality. 
Furthermore
\begin{align}\label{1291}
\nabla _{e_{i}}d\psi(e_{i})&=\nabla _{e_{i}}d\psi(e_{i})-d\psi(\nabla _{e_{i}}^{M}e_{i})\nonumber \\&=\tau(\psi).
\end{align}
By substituting \eqref{1290} and \eqref{1291} in the last equation of \eqref{1289}, we get
\begin{align}
&(div S_{\alpha}(\psi))(Y)\nonumber \\&=-2
\alpha(1+\mid d\psi\mid ^{2})^{\alpha-1})h(\tau(\psi), d\psi(Y))\nonumber \\&-h(d\psi(grad (2
\alpha(1+\mid d\psi\mid ^{2})^{\alpha-1}), d\psi(Y))
\nonumber \\&
=-h(\tau _{\alpha}(\psi), d\psi(Y)),
\end{align}
and hence completes the proof. 
\end{proof}
\begin{remark}
Based on Proposition \ref{14}, it can be seen that if  
$\psi $ is a submersion and $div S_{\alpha}(\psi)=0,$ then $\psi $ is an $\alpha-$harmonic map. Conversely, if $\psi $ is an $\alpha-$harmonic map then $div S_{\alpha}(\psi)=0$. 
\end{remark}

\section{Horizontal conformal $\alpha-$harmonic maps }
\noindent 
Let
$\psi :(M^{m},g)\longrightarrow (N^{n},h)$ be a smooth map between Riemannian manifolds and let 
$d\psi_{p}\neq 0$
for each 
$p\in M$.
The vertical space at 
$p$
with respect to 
$\psi$
is denoted by 
$V_{p}$
and defined by 
$V_{p}=ker \, d\psi_{p}$.
The orthogonal complement of 
$V_{p}$
in 
$T_{p}M$
is denoted by 
$H_{p}$
and called the 
horizontal space at 
$p$.
For any $X\in T_{p}M$, the tangent vector 
$X$ can be decomposed as follows
\begin{equation}
X=X^{H}+X^{V},
\end{equation}
here $X^{V}\in V_{p}$ and $X^{H}\in H_{p}$. The map 
$\psi$ is called a horizontally conformal map if there exists a positive function $\mu \in C^{\infty}(M)$ such that 
\begin{equation}
h(d\psi (X), d\psi (Y))=\mu^{2}g(X,Y),
\end{equation}
for any $X,Y\in H_{p}$ and $p\in M$. The function 
$\mu$ is said to be the dilation of $\psi$. \\

\begin{remark}
In Relativistic Astrophysics,  
 conformal mappings are used as a mathematical mechanism to obtain exact solutions to the Einstein field equations in
general relativity. The behavior of the space-time geometry quantities is given under a conformal transformation and the Einstein
field equations are exhibited for a perfect fluid distribution matter configuration, \cite{h1}.
\end{remark}

\begin{theorem}\label{3.44}
Let
$\psi :(M^{m},g)\longrightarrow (N^{n},h)$ be a horizontally conformal $\alpha-$harmonic map with dilation 
$\mu$
where 
$m>n$ and $n>2\alpha$. Then, the fibers of 
$\psi $ are minimal submanifold if and only if 
$grad \mu^{2}$ is vertical.  
\end{theorem}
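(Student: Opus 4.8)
The plan is to combine the classical expression for the tension field of a horizontally conformal submersion with the $\alpha$-tension field formula \eqref{h1}, and then to read off the decisive role of the hypothesis $n>2\alpha$ from the resulting algebraic identity. First I would fix an adapted local orthonormal frame $\{X_a\}_{a=1}^{n}\cup\{U_s\}_{s=n+1}^{m}$, with the $X_a$ spanning the horizontal distribution $H$ and the $U_s$ spanning the vertical distribution $V=\ker d\psi$. Writing $\kappa:=\sum_s \big(\nabla^{M}_{U_s}U_s\big)^{H}$ for the mean curvature vector field of the fibres (a horizontal field, so that the fibres are minimal precisely when $\kappa\equiv 0$), the standard computation of $\sum_i(\nabla d\psi)(e_i,e_i)$ for a horizontally conformal map yields a formula of the structural form
\begin{equation*}
\tau(\psi)=-d\psi\big((m-n)\,\kappa+(n-2)\,\mathrm{grad}^{H}\ln\mu\big),
\end{equation*}
where $\mathrm{grad}^{H}$ denotes the horizontal part of the gradient. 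I would also record the elementary identity $|d\psi|^{2}=n\mu^{2}$, immediate from $h(d\psi X,d\psi Y)=\mu^{2}g(X,Y)$ on $H$ together with $d\psi|_{V}=0$.

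Next I would substitute these into \eqref{h1}. Setting $f:=2\alpha(1+n\mu^{2})^{\alpha-1}$, the $\alpha$-tension field is $\tau_{\alpha}(\psi)=f\,\tau(\psi)+d\psi(\mathrm{grad}\,f)$, and since $d\psi$ annihilates the vertical part of any gradient only $\mathrm{grad}^{H}f$ survives. Because $f$ depends on the point only through $\mu$, the term $\mathrm{grad}^{H}f$ is proportional to $\mathrm{grad}^{H}\ln\mu$, exactly like the term $f(n-2)\mathrm{grad}^{H}\ln\mu$. Using that $d\psi$ is injective on $H$ (immediate from $\mu>0$), the equation $\tau_{\alpha}(\psi)=0$ becomes the horizontal vector identity
\begin{equation*}
f(m-n)\,\kappa+C\,\mathrm{grad}^{H}\ln\mu=0,
\end{equation*}
where the direct computation $\mathrm{grad}^{H}f=4n\alpha(\alpha-1)\mu^{2}(1+n\mu^{2})^{\alpha-2}\,\mathrm{grad}^{H}\ln\mu$ gives, after collecting terms,
\begin{equation*}
C=2\alpha(1+n\mu^{2})^{\alpha-2}\big[(n-2)+n\mu^{2}(n-2\alpha)\big].
\end{equation*}

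The crux — and the place the hypotheses enter — is the sign of $C$. Since $n>2\alpha>2$, both $n-2$ and $n-2\alpha$ are strictly positive, so the bracket, and hence $C$, is strictly positive at every point; likewise $f>0$ and $m-n>0$ by $m>n$. The displayed identity can therefore be rewritten as $\kappa=-\frac{C}{f(m-n)}\,\mathrm{grad}^{H}\ln\mu$, whence $\kappa\equiv 0$ if and only if $\mathrm{grad}^{H}\ln\mu\equiv 0$. Finally, since $\mu>0$ and $\mathrm{grad}^{H}\mu^{2}=2\mu\,\mathrm{grad}^{H}\mu$, the vanishing of $\mathrm{grad}^{H}\ln\mu$ is the same as saying that $\mathrm{grad}\,\mu^{2}$ is vertical, while $\kappa\equiv 0$ is exactly minimality of the fibres. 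This delivers the asserted equivalence.

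I expect the main obstacle to be the derivation of the tension-field formula for horizontally conformal submersions used in the first step: it requires expanding the second fundamental form $\nabla d\psi$ in the adapted frame, separating the horizontal and vertical contributions, and correctly identifying the fibre mean curvature $\kappa$ and the horizontal gradient of the dilation. Once that structural formula is secured, the remaining work is the bookkeeping producing the factorization $(n-2)+n\mu^{2}(n-2\alpha)$ and the simple observation that $n>2\alpha$ forces it to be strictly positive, which is what makes the two geometric quantities $\kappa$ and $\mathrm{grad}^{H}\ln\mu$ vanish simultaneously.
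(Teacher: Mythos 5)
Your proof is correct in substance, but it takes a genuinely different route from the paper. The paper never invokes the classical tension-field formula for horizontally conformal maps; instead it works with the $\alpha$-stress-energy tensor of \eqref{4590}: by Proposition \ref{14}, $\alpha$-harmonicity forces $\mathrm{div}\,S_{\alpha}(\psi)=0$, and the paper expands this divergence in an adapted frame ($e_{1},\dots,e_{n}$ horizontal, $e_{n+1},\dots,e_{m}$ vertical) to arrive at exactly your identity, namely \eqref{67}, with the same bracket $(n-2)+n\mu^{2}(n-2\alpha)$ and the same positivity argument from $n>2\alpha$ and $\alpha>1$. Your route --- substituting the Baird--Wood fundamental equation $\tau(\psi)=-d\psi\big((m-n)\hat{H}+(n-2)\,\mathrm{grad}^{H}\ln\mu\big)$ into \eqref{h1} and using injectivity of $d\psi$ on the horizontal distribution --- is shorter and conceptually cleaner, but it outsources the hard part to the quoted formula, which you would need to cite or derive; that derivation is essentially the same adapted-frame bookkeeping that the paper performs on $\mathrm{div}\,S_{\alpha}$, so the total work is comparable. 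What the paper's route buys in addition is the vertical component of the same divergence identity, which yields Corollary \ref{8o} (the horizontal distribution has mean curvature $(\mathrm{grad}\,\mu^{2})^{V}/(2\mu^{2})$); your pairing of $\tau_{\alpha}(\psi)$ against horizontal vectors can never see this, since $d\psi$ annihilates vertical directions. One small normalization slip: you define $\kappa=\sum_{s}(\nabla^{M}_{U_{s}}U_{s})^{H}$, the full trace of the second fundamental form of the fibres, but then write the tension-field formula with the factor $(m-n)$ that belongs to the normalized mean curvature vector $\hat{H}=\kappa/(m-n)$; one of the two factors must be dropped. Since only the positivity of the coefficients matters for the equivalence, this does not affect your conclusion.
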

\begin{proof}
For 
$p\in M$, 
choose a local orthonormal frame field 
$\{e_{i}\}_{i=1}^{m}$
near 
$p$,  such that 
$e_{1}, \cdots , e_{n}$
are horizontal vectors and 
$e_{n+1}, \cdots , e_{m}$
are vertical tangent vectors. Due to the fact that 
$\psi$
is horizontally conformal with dilation $\mu$, 
we get  
$\mid d\psi \mid ^{2}=n\mu ^{2}$. By \eqref{4590},  it is obtained that 
\begin{align}
S_{\alpha}(\psi)&=(1+n\mu ^{2})^{\alpha}g\nonumber \\& -2\alpha(1+n\mu ^{2})^{\alpha-1}\psi^{\ast}h.
\end{align}
By proposition 
\ref{14} 
and considering the fact that the map 
$\psi$
is 
$\alpha-$
harmonic, we get 
\begin{align}\label{5.1}
0&=div (S_{\alpha}(\psi))(e_{j})\nonumber \\&=
\sum _{i=1}^{m}(\nabla _{e_{i}}S_{\alpha}(\psi))(e_{i}, e_{j})
\nonumber \\&=n\alpha (1+n\mu ^{2})^{\alpha -1}e_{j}(\mu^{2})\nonumber \\&-\sum _{i=1}^{n}e_{i}\{2\alpha\mu^{2}(1+n\mu^{2})^{\alpha -1}h(d\psi(e_{i}), d\psi(e_{j}))\}\nonumber \\&+2\alpha(1+n\mu^{2})^{\alpha-1}\sum _{i=1}^{m}\{h(d\psi (\nabla ^{M}_{e_{i}}e_{i}), d\psi(e_{j}))\nonumber \\&+h(d\psi (e_{i}), d\psi (\nabla ^{M}_{e_{i}}e_{j}))\}.
\end{align}
For $j\quad (1\leq j \leq n)$, we have 
\begin{align}\label{5.2}
0&=\sum _{i=1}^{n}e_{i}(g(e_{i},e_{j}))\nonumber \\
&=\sum _{i=1}^{n}\{g(\nabla ^{M}_{e_{i}}e_{i}, e_{j})+g(e_{i}, \nabla ^{M}_{e_{i}}e_{j})\}
\nonumber \\
&=\sum _{i=1}^{n}\{g((\nabla ^{M}_{e_{i}}e_{i})^{H}, e_{j})+g(e_{i}, (\nabla ^{M}_{e_{i}}e_{j})^{H})\}
\nonumber \\
&=\dfrac{1}{\mu ^{2}}\sum _{i=1}^{n}\{h(d\psi (\nabla ^{M}_{e_{i}}e_{i})^{H}, d\psi(e_{j}))\nonumber \\
&+h(d\psi(e_{i}), d\psi(\nabla ^{M}_{e_{i}}e_{j})^{H})\}
\nonumber \\
&=\dfrac{1}{\mu ^{2}}\sum _{i=1}^{n}\{h(d\psi (\nabla ^{M}_{e_{i}}e_{i}), d\psi(e_{j}))\nonumber \\
&+h(d\psi(e_{i}), d\psi(\nabla ^{M}_{e_{i}}e_{j}))\}
\end{align}
By making use of \eqref{5.1} and \eqref{5.2}, for 
$1\leq j \leq n$, we get 
\begin{align}\label{5.3}
0&=n\alpha (1+n\mu ^{2})^{\alpha -1}e_{j}(\mu^{2})-e_{j}(2\alpha\mu^{2}(1+n\mu^{2})^{\alpha -1})\nonumber \\&+2\alpha(1+n\mu^{2})^{\alpha-1}\sum _{i=n+1}^{m}\{h(d\psi (\nabla ^{M}_{e_{i}}e_{i}), d\psi(e_{j}))\nonumber \\&+h(d\psi (e_{i}), d\psi (\nabla ^{M}_{e_{i}}e_{j}))\}\nonumber \\&=n\alpha (1+n\mu ^{2})^{\alpha -1}e_{j}(\mu^{2})-2\alpha(1+n\mu^{2})^{\alpha -1})e_{j}(\mu^{2})\nonumber \\&-2n\mu^{2}\alpha (\alpha -1)(1+n \mu^{2})^{\alpha -2}e_{j}(\mu^{2})
\nonumber \\&+
2\alpha(1+n\mu^{2})^{\alpha-1}\sum _{i=n+1}^{m}h(d\psi (\nabla ^{M}_{e_{i}}e_{i})^{H}, d\psi(e_{j}))
\nonumber \\&=\alpha(1+n\mu^{2})^{\alpha-2}[(n-2)+n\mu^{2}(n-2\alpha)]e_{j}(\mu^{2})\nonumber \\&+2\alpha \mu^{2}(1+n\mu^{2})^{\alpha-1}\sum _{i=n+1}^{m}g(\nabla ^{M}_{e_{i}}e_{i}, e_{j}). 
\end{align}
By \eqref{5.3} and the definition of the mean curvature vector 
,$H$, of the fiber of 
$\psi$, given as follows 
\begin{equation}
H=\dfrac{1}{m-n}\sum _{j=1}^{n}\sum _{i=n+1}^{m}g(\nabla ^{M}_{e_{i}}e_{i}, e_{j})e_{j},
\end{equation}
it is obtained that 
\begin{align}\label{67}
&\alpha (1+n\mu^{2})^{\alpha-2}[(n-2)+n\mu^{2}(n-2\alpha)](grad \mu^{2})^{H}\nonumber \\&+2(m-n)\alpha (1+n\mu^{2})^{\alpha-1}H=0.
\end{align}
From \eqref{67}   and the assumptions of this theorem, 
it can be seen that 
\begin{equation}\label{68}
\alpha (1+n\mu^{2})^{\alpha-2}[(n-2)+n\mu^{2}(n-2\alpha)]>0.
\end{equation}
From \eqref{67} and \eqref{68}, the theorem \ref{3.44} follows. 
\end{proof}
\begin{corollary}\label{8o}
Under the assumptions of the theorem \ref{3.44}, 
the fibers of 
$\psi $ are minimal submanifold if and only if 
the horizontal distribution has mean curvature 
$\dfrac{grad \mu ^{2}}{2\mu ^{2}}$.
\end{corollary}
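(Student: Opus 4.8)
The plan is to reduce the corollary to Theorem~\ref{3.44} by isolating one purely geometric identity valid for \emph{any} horizontally conformal map: the mean curvature vector of the horizontal distribution always equals the vertical part of $\dfrac{grad\,\mu^{2}}{2\mu^{2}}$. Once this identity is established, the stated equivalence drops out by comparing vertical and horizontal components. Accordingly, I would first set up the same adapted frame used in the proof of Theorem~\ref{3.44}: an orthonormal frame $\{e_{i}\}_{i=1}^{m}$ near $p$ with $e_{1},\dots,e_{n}$ horizontal and $e_{n+1},\dots,e_{m}$ vertical, and define the mean curvature of the horizontal distribution by $H^{\mathcal H}=\frac{1}{n}\sum_{i=1}^{n}(\nabla^{M}_{e_{i}}e_{i})^{V}$, which is a vertical field.

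The core of the argument is the computation of $H^{\mathcal H}$. Fix a vertical field $V$ and a unit horizontal $e_{i}$, and differentiate the conformality relation $h(d\psi(e_{i}),d\psi(e_{i}))=\mu^{2}$ along $V$, giving $\tfrac12 V(\mu^{2})=h(\nabla_{V}d\psi(e_{i}),d\psi(e_{i}))$. Using the symmetry of the second fundamental form $\nabla d\psi$ together with $d\psi(V)=0$, one rewrites $\nabla_{V}d\psi(e_{i})=d\psi\big((\nabla^{M}_{V}e_{i})^{H}-(\nabla^{M}_{e_{i}}V)^{H}\big)$, and conformality then converts the $h$-product into a $g$-product. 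Since $g(\nabla^{M}_{V}e_{i},e_{i})=\tfrac12 V(g(e_{i},e_{i}))=0$ and $g(\nabla^{M}_{e_{i}}V,e_{i})=-g(V,\nabla^{M}_{e_{i}}e_{i})$, the whole expression collapses to $g\big(V,(\nabla^{M}_{e_{i}}e_{i})^{V}\big)=\dfrac{V(\mu^{2})}{2\mu^{2}}$ for every $i$. Summing over $i$ and letting $V$ range over all vertical directions yields the identity $H^{\mathcal H}=\dfrac{(grad\,\mu^{2})^{V}}{2\mu^{2}}$. As a consistency check, when $\mu$ is constant this recovers $(\nabla^{M}_{X}X)^{V}=0$ for Riemannian submersions.

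To conclude, I would observe that the equality $H^{\mathcal H}=\dfrac{grad\,\mu^{2}}{2\mu^{2}}$ asserted in the corollary holds precisely when $\dfrac{(grad\,\mu^{2})^{V}}{2\mu^{2}}=\dfrac{grad\,\mu^{2}}{2\mu^{2}}$, that is, exactly when $(grad\,\mu^{2})^{H}=0$, i.e. when $grad\,\mu^{2}$ is vertical. By Theorem~\ref{3.44}, whose proof reduces via \eqref{67} and the positivity \eqref{68} to the single condition that $grad\,\mu^{2}$ be vertical, this last statement is equivalent to the minimality of the fibers of $\psi$, and the corollary follows.

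The main obstacle is the middle step: carrying out the symmetry-of-$\nabla d\psi$ and conformality bookkeeping carefully enough that the horizontal contributions cancel and only the vertical projection of $\nabla^{M}_{e_{i}}e_{i}$ survives, with the normalization $\tfrac1n$ chosen so that each horizontal direction contributes the same value $\tfrac{1}{2}V(\ln\mu^{2})$. By contrast, the hypotheses $m>n$ and $n>2\alpha$ play no role in this identity itself; they enter only through the invocation of Theorem~\ref{3.44}, where they guarantee the sign in \eqref{68}.
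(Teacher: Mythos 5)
Your proof is correct, but it reaches the key identity $\bar{H}=\dfrac{(grad\,\mu^{2})^{V}}{2\mu^{2}}$ by a genuinely different route than the paper. The paper extracts this identity from the $\alpha$-harmonicity of $\psi$: it evaluates the vanishing divergence of the $\alpha$-stress energy tensor, equation \eqref{5.1}, on vertical directions $e_{j}$ ($n+1\leq j\leq m$) to get \eqref{5.6}, then uses horizontal conformality in \eqref{5.7} to convert the surviving sum into $\sum_{i=1}^{n}g(\nabla^{M}_{e_{i}}e_{i},e_{j})$, arriving at \eqref{5.20}. You instead prove the same identity as a purely geometric fact valid for \emph{any} horizontally conformal submersion, with no harmonicity input: differentiating $h(d\psi(e_{i}),d\psi(e_{i}))=\mu^{2}$ along a vertical field and using the symmetry of $\nabla d\psi$ gives $g\big(V,(\nabla^{M}_{e_{i}}e_{i})^{V}\big)=\dfrac{V(\mu^{2})}{2\mu^{2}}$ for each single horizontal unit direction $e_{i}$ (essentially the classical formula for the O'Neill-type tensor $A_{X}X$ of a horizontally conformal map). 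Your version buys two things: it yields the per-direction statement rather than only its trace, and it makes transparent that $\alpha$-harmonicity and the hypotheses $m>n$, $n>2\alpha$ enter only through Theorem \ref{3.44} (via \eqref{67} and the sign condition \eqref{68}), not through the computation of the horizontal mean curvature. What the paper's route buys is economy inside its own framework: equation \eqref{5.1} is already available from the proof of Theorem \ref{3.44}, so the corollary follows with almost no extra machinery. The concluding step --- fibers minimal iff $grad\,\mu^{2}$ is vertical iff $(grad\,\mu^{2})^{V}=grad\,\mu^{2}$ iff $\bar{H}=\dfrac{grad\,\mu^{2}}{2\mu^{2}}$ --- is identical in both arguments.
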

\begin{proof}
By \eqref{5.1}, for $j \quad (n+1\leq j \leq m)$, we get
\begin{align}\label{5.6}
0&= \alpha (1+n\mu^{2})^{\alpha-2}\{ne_{j}(\mu^{2})\nonumber \\&+2\sum _{i=1}^{n}h(d\psi(e_{i}), d\psi(\nabla ^{M}_{e_i}e_{j})) \}.
\end{align}
For \, $i \quad (1\leq i \leq n )$, it is obtained that 
\begin{align}\label{5.7}
&h(d\psi(e_{i}), d\psi(\nabla ^{M}_{e_{i}}e_{j}))\nonumber \\&=h(d\psi(e_{i}), d\psi(\nabla ^{M}_{e_{i}}e_{j})^{H})\nonumber \\&=\mu^{2}g(e_{i}, (\nabla ^{M}_{e_{i}}e_{j})^{H})\nonumber \\&=\mu^{2}g(e_{i}, \nabla ^{M}_{e_{i}}e_{j})\nonumber \\&=-\mu^{2}g( \nabla ^{M}_{e_{i}}e_{i},e_{j}).
\end{align}
By \eqref{5.6} and \eqref{5.7}, we get 
\begin{equation}
ne_{j}(\mu^{2})-2\mu^{2}\sum _{i=1}^{n}g( \nabla ^{M}_{e_{i}}e_{i},e_{j})=0.
\end{equation}
Thus, the mean curvature 
,$\bar{H}$, of the horizontally distribution is obtained as follows
\begin{align}\label{5.20}
\bar{H}&=\dfrac{1}{n}\sum _{j=n+1}^{m}\sum _{i=1}^{n}g(\nabla ^{M}_{e_{i}}e_{i}, e_{j})e_{j}\nonumber \\&=\dfrac{1}{2\mu^{2}}\sum _{j=n+1}^{m}e_{j}(\mu^{2})e_{j}\nonumber \\&=\dfrac{1}{2\mu^{2}}(grad \mu^{2})^{V}. 
\end{align}
From the theorem \ref{3.44} and equation \eqref{5.20}, Corollary \ref{8o} follows. 
\end{proof}

\section{Second variational formula }
\noindent
In this section, the second variation formula of the $\alpha-$energy functional is calculated by using the Jacobi operator and Green's theorem. \\
\begin{remark}
In physics,  the Jacobi operator plays a key role to solve the nonlinear
boundary value of Troesch's equation.  This equation arises in the investigation of confinement of a plasma column by radiation pressure and also in the theory of gas porous electrodes, \cite{m1,jk1}.
\end{remark}

\begin{theorem}(The second variation formula)\label{12sd}
Let 
$ \psi :(M,g)\longrightarrow (N,h) $ be an $\alpha-$ harmonic map and 
$\{\psi_{t,s}:M\longrightarrow N \}_{-\epsilon <s, t<\epsilon}$
be a 2-parameter smooth variation of $ \psi $
such that
$ \psi_{0,0}=\psi. $
 Then
 \begin{align}
&\dfrac{\partial ^{2}}{\partial t \partial s}E_{\alpha }(\psi)\mid _{t=s=0}=- \int _{M}h(J_{\alpha}(\upsilon), \omega),
 \end{align}
 where 
$$\upsilon= \dfrac{\partial \psi_{t,s}}{\partial t}\mid _{s=t=0}, \qquad \omega= \dfrac{\partial \psi_{t,s}}{\partial s}\mid _{s=t=0}, $$
 and 
 $J_{\alpha}(\upsilon)\in \Gamma (\psi ^{-1}TN)$ is given by 
 \begin{align}\label{36}
&J_{\alpha}(\upsilon)\nonumber \\&=2\alpha(1+\mid d\psi \mid ^{2})^{\alpha-1}Tr_{g} R^{N}(\upsilon, d\psi)d\psi 
\nonumber \\&+4\alpha(\alpha -1) Tr_{g} \nabla \langle \nabla \upsilon, d\psi\rangle (1+\mid d\psi \mid ^{2})^{\alpha-2}d\psi
\nonumber \\&+2\alpha Tr_{g}\nabla (1+\mid d\psi \mid ^{2})^{\alpha-1}\nabla \upsilon,
 \end{align}
 here $R^{N}$ is the curvature tensor on $(N,h)$ and $\langle ,\rangle$ denotes the inner product on $T^{\ast}M\otimes \psi ^{-1}TN. $
\end{theorem}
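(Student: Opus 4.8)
The plan is to differentiate $E_{\alpha}(\psi_{t,s})$ twice under the integral sign and then use Green's theorem to transfer all derivatives onto $\omega$, discarding exactly the terms that vanish because $\psi$ is $\alpha$-harmonic. First I would fix a local orthonormal frame $\{e_{i}\}_{i=1}^{m}$ with $\nabla^{M}_{e_{i}}e_{j}\mid_{p}=0$ and write $\mid d\psi_{t,s}\mid^{2}=\sum_{i}h(d\psi_{t,s}(e_{i}),d\psi_{t,s}(e_{i}))$. Using the symmetry $\nabla_{\partial_{s}}d\psi(e_{i})=\nabla_{e_{i}}\partial_{s}\psi$ of the induced connection on $\psi^{-1}TN$, the first variation in $s$ reads
\[ \frac{\partial}{\partial s}E_{\alpha}(\psi_{t,s})=2\alpha\int_{M}(1+\mid d\psi_{t,s}\mid^{2})^{\alpha-1}\langle\nabla\partial_{s}\psi,d\psi_{t,s}\rangle\,dV_{g}. \]
This is the identity underlying the Euler--Lagrange equation \eqref{h1}; in particular $2\alpha\int_{M}(1+\mid d\psi\mid^{2})^{\alpha-1}\langle\nabla X,d\psi\rangle\,dV_{g}=-\int_{M}h(X,\tau_{\alpha}(\psi))\,dV_{g}$ for every $X\in\Gamma(\psi^{-1}TN)$, which I will invoke at the end.

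Next I would apply $\partial_{t}$ and set $t=s=0$, so that the product rule splits the answer into two groups. Differentiating the factor $(1+\mid d\psi_{t,s}\mid^{2})^{\alpha-1}$ and using $\partial_{t}\mid d\psi\mid^{2}=2\langle\nabla\upsilon,d\psi\rangle$ produces the term $4\alpha(\alpha-1)\int_{M}(1+\mid d\psi\mid^{2})^{\alpha-2}\langle\nabla\upsilon,d\psi\rangle\langle\nabla\omega,d\psi\rangle\,dV_{g}$. Differentiating the pairing $\langle\nabla\partial_{s}\psi,d\psi\rangle$ forces me to commute $\nabla_{\partial_{t}}$ past $\nabla_{e_{i}}$; since $[\partial_{t},e_{i}]=0$, the curvature of $\psi^{-1}TN$ yields
\[ \nabla_{\partial_{t}}\nabla_{e_{i}}\partial_{s}\psi=\nabla_{e_{i}}\nabla_{\partial_{t}}\partial_{s}\psi+R^{N}(\upsilon,d\psi(e_{i}))\omega, \]
and, together with $\nabla_{\partial_{t}}d\psi(e_{i})=\nabla_{e_{i}}\upsilon$, this group becomes
\[ 2\alpha\int_{M}(1+\mid d\psi\mid^{2})^{\alpha-1}\Big[\langle\nabla\kappa,d\psi\rangle+\sum_{i}h(R^{N}(\upsilon,d\psi(e_{i}))\omega,d\psi(e_{i}))+\langle\nabla\omega,\nabla\upsilon\rangle\Big]dV_{g}, \]
where $\kappa=\nabla_{\partial_{t}}\partial_{s}\psi\mid_{0}$ is the acceleration field.

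It then remains to reorganize everything into $-\int_{M}h(J_{\alpha}(\upsilon),\omega)$. The acceleration term vanishes: applying the first-variation identity with $X=\kappa$ and using $\tau_{\alpha}(\psi)=0$ (this is the single place where $\alpha$-harmonicity is used) gives $2\alpha\int_{M}(1+\mid d\psi\mid^{2})^{\alpha-1}\langle\nabla\kappa,d\psi\rangle=0$, so $\kappa$ drops out entirely. For the curvature term I would use the antisymmetry $h(R^{N}(\upsilon,d\psi(e_{i}))\omega,d\psi(e_{i}))=-h(R^{N}(\upsilon,d\psi(e_{i}))d\psi(e_{i}),\omega)$ to recognize the first summand of $J_{\alpha}$. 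The two quadratic-in-gradient terms are transferred onto $\omega$ by Green's theorem: integrating the divergences of the vector fields metrically dual to the $1$-forms $(1+\mid d\psi\mid^{2})^{\alpha-1}h(\omega,\nabla_{(\cdot)}\upsilon)$ and $4\alpha(\alpha-1)(1+\mid d\psi\mid^{2})^{\alpha-2}\langle\nabla\upsilon,d\psi\rangle\,h(\omega,d\psi(\cdot))$ yields exactly $-\int_{M}h(\omega,2\alpha\,Tr_{g}\nabla(1+\mid d\psi\mid^{2})^{\alpha-1}\nabla\upsilon)$ and $-\int_{M}h(\omega,4\alpha(\alpha-1)Tr_{g}\nabla\langle\nabla\upsilon,d\psi\rangle(1+\mid d\psi\mid^{2})^{\alpha-2}d\psi)$ respectively; in the second of these, $\sum_{i}\nabla_{e_{i}}d\psi(e_{i})=\tau(\psi)$ appears but is absorbed into the trace $Tr_{g}\nabla$ rather than discarded.

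Assembling the three pieces gives the stated formula. I expect the main obstacle to be the bookkeeping in this commutation-and-integration-by-parts step: one must track which tensor factor each covariant derivative acts on, apply the torsion-free and curvature identities for the pullback connection correctly, and verify that the boundary-free integration by parts redistributes the derivatives so that the three structurally different terms of $J_{\alpha}$ emerge with precisely the constants $2\alpha$, $4\alpha(\alpha-1)$, $2\alpha$ and the correct signs. Conceptually the content is light — it is the harmonic-map second variation formula with the weight $(1+\mid d\psi\mid^{2})^{\alpha-1}$ inserted — but that weight generates the extra $(\alpha-1)$ term, which is the one place where care with the chain rule is essential.
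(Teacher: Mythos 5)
Your proposal is correct and follows essentially the same route as the paper's proof: differentiate the energy density under the integral, commute $\nabla_{\partial_t}$ and $\nabla_{e_i}$ via the curvature of the pullback connection, kill the acceleration term $\nabla_{\partial_t}\partial_s\psi$ using $\tau_{\alpha}(\psi)=0$, and redistribute the remaining derivatives onto $\omega$ by Green's theorem to assemble the three terms of $J_{\alpha}$. The only difference is organizational — you compute the first variation globally and then differentiate again, while the paper computes the pointwise mixed second derivative of $(1+\mid d\psi_{t,s}\mid^{2})^{\alpha}$ and integrates at the end — which does not change the substance of the argument.
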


\begin{proof}
Let $(-\epsilon,\epsilon)\times (-\epsilon,\epsilon)\times M$ be a product manifold that is equipped with the product metric, and let  the natural extension of 
$\dfrac{\partial}{\partial t} $ 
on 
$(-\epsilon,\epsilon)$, 
$\dfrac{\partial}{\partial s} $
 on 
$(-\epsilon,\epsilon)$
and 
$\dfrac{\partial}{\partial x} $
on 
$M$
to 
 the product manifold 
 $(-\epsilon,\epsilon)\times (-\epsilon,\epsilon)\times M$ 
are denoted by 
$\dfrac{\partial}{\partial t}, \dfrac{\partial}{\partial s}$ 
and 
$\dfrac{\partial}{\partial x}$ again,
respectively.  Assume that  $\Psi: (-\epsilon,\epsilon)\times (-\epsilon,\epsilon)\times M\longrightarrow N $ be a smooth map defined by 
$\Psi(t,s,x)=\psi_{t,s}(x).$
The same notation 
$\nabla$
shall be used for the induced connection on 
$\Psi^{-1}TN$. 
Consider an orthonormal frame 
$\{e_{i}\}$
with respect to 
$g$
on 
$M$. 
 Then,  by \eqref{34rtz}, we have 

\begin{align}\label{cv4}
&\dfrac{\partial ^{2}}{\partial t \partial s}E_{\alpha }(\psi_{t,s})\mid _{s=t=0}\nonumber \\&= \int _{M}\dfrac{\partial ^{2}}{\partial t \partial s}(1+\mid d\psi _{t,s}\mid ^{2})^{\alpha}\mid _{s=t=0}dV _{g}. 
\end{align}
By computing the second derivative, we get 
\begin{align}\label{cv5}
&\dfrac{\partial ^{2}}{\partial t \partial s}(1+\mid d\psi _{t,s}\mid ^{2})^{\alpha}
\nonumber \\&=
(\dfrac{\partial A }{\partial s}) h(\nabla _{\dfrac{\partial }{\partial t}} d\psi _{t,s}(e_{i}),d\psi _{t,s}(e_{i}))
\nonumber \\&+
A\{h(\nabla _{\dfrac{\partial }{\partial t}} d\psi _{t,s}(e_{i}),\nabla _{\dfrac{\partial }{\partial s}}d\psi _{t,s}(e_{i}))
\nonumber \\&+
h(\nabla _{\dfrac{\partial }{\partial s}} \nabla _{\dfrac{\partial }{\partial t}} d\psi _{t,s}(e_{i}),d\psi _{t,s}(e_{i}))\}, 
\end{align}
where $A=2\alpha(1+\mid d\psi _{t,s}\mid ^{2})^{\alpha-1}$.
Due to the fact that  
\begin{align}
\dfrac{\partial A}{\partial s}\mid _{s=t=0}=B h(\nabla _{e_{i}}\omega, d\psi(e_{i})),
\end{align}
here $B=4\alpha(\alpha -1) (1+\mid d\psi \mid ^{2})^{\alpha-2}$, 
 the first term of the right-hand side of 
 \eqref{cv5}, can be obtained as follows 
 \begin{align}\label{cv6}
 & \dfrac{\partial A}{\partial s}
 h(\nabla _{\dfrac{\partial }{\partial t}} d\psi _{t,s}(e_{i}),d\psi _{t,s}(e_{i}))\mid _{s=t=0}\nonumber \\ &=B \langle \nabla \upsilon, d\psi\rangle h(\nabla _{e_{i}}\omega ,d\psi(e_{i}))
 \nonumber \\ &
 =e_{i}(h(\omega,B \langle \nabla \upsilon, d\psi\rangle d\psi(e_{i})))
 \nonumber \\ &-
 h(\omega, \nabla _{e_{i}}\{B\langle \nabla \upsilon, d\psi\rangle d\psi(e_{i})\}).
 \end{align}
 Furthermore, by calculating the second term of the right-hand side of 
 \eqref{cv5}, 
 we get 
 \begin{align}\label{cv7}
&Ah(\nabla _{\dfrac{\partial }{\partial t}} d\psi _{t,s}(e_{i}),\nabla _{\dfrac{\partial }{\partial s}}d\psi _{t,s}(e_{i}))\mid _{s=t=0}
\nonumber \\&=
e_{i}(h(2\alpha(1+\mid d\psi\mid ^{2})^{\alpha-1}\nabla _{e_{i}}\upsilon, \omega))
\nonumber \\&-h(\nabla _{e_{i}}(2\alpha\nabla _{e_{i}}(1+\mid d\psi \mid ^{2})^{\alpha-1}\nabla _{e_{i}}\upsilon), \omega).
 \end{align}
 By the definition and properties of the curvature tensor of
 $(N,h)$ and the compatibility of $\nabla$ with the metric $h$, the last term of the right-hand side of \eqref{cv5}, can be obtained as follows
 \begin{align}\label{cv8}
  &Ah(\nabla _{\dfrac{\partial }{\partial s}} \nabla _{\dfrac{\partial }{\partial t}} d\psi _{t,s}(e_{i}),
  d\psi _{t,s}(e_{i}))\mid _{s=t=0}
  \nonumber \\&
 =-2\alpha(1+\mid d\psi \mid ^{2})^{\alpha-1}h(R^{N}(\upsilon, d\psi(e_{i}))d\psi(e_{i}),\omega)  \nonumber \\&
 +e_{i}(h(\nabla _{\dfrac{\partial }{\partial s}}d\psi _{t,s}(\dfrac{\partial }{\partial t}),Ad\psi _{t,s}(e_{i})))\mid _{s=t=0}
\nonumber \\&
- h(\nabla _{\dfrac{\partial }{\partial s}}d\psi _{t,s}(\dfrac{\partial }{\partial t}), \nabla _{e_{i}}(Ad\psi_{t,s}(e_{i})) )\mid _{s=t=0}
  \end{align} 
  From equations \eqref{cv4},\eqref{cv5},\eqref{cv6},\eqref{cv7} and \eqref{cv8} and considering the Green's theorem and 
$\alpha-$ harmonicity of $\psi$, theorem \ref{12sd} follows. 
\end{proof}
\section{Stability of $\alpha$-harmonic maps}
In this section, the stability of $\alpha-$harmonic maps is studied.  First, we showed that any $\alpha-$harmonic map from a Riemannian manifold to a Riemannian manifold with non-positive curvature is stable. Then, the stability of $\alpha-$harmonic maps from a compact manifold to a unit standard sphere is investigated. \\
\begin{remark}
The stability of harmonic maps plays a key role in mathematical physics and mechanics, \cite{con11}.  For instance,  The linearized Vlasov-Maxwell equations are used to investigate harmonic stability properties for planar wiggler free-electron laser(FEL). It is worth noting that the analysis is carried out in the Compton regime for a tenuous electron beam propagating in the $z$ direction through the constant amplitude planar wiggler magnetic field $B^{0}=-B_{\omega}cos k_{0}z\hat{e}_{x}$, \cite{D1}.
\end{remark}

\begin{definition}\label{35}
Under the assumptions of theorem \ref{12sd}, 
setting
\begin{equation}
I(\upsilon,\omega)=\dfrac{\partial ^{2}}{\partial t \partial s}E_{\alpha }(\psi_{t,s})\mid _{s=t=0}.
\end{equation}
Then,  
$\psi$ is said to be stable $\alpha-$harmonic if $I(\upsilon,\upsilon)\geq 0$
for any compactly supported vector field $\upsilon$ along $\psi$. 
\end{definition}
From Theorem \ref{12sd}, we obtain the following corollary. 
\begin{corollary}\label{45}
Let $N$ be a Riemannian manifold with non-positive   Riemannian curvature.  Then,  any $\alpha-$harmonic map $ \psi :(M,g)\longrightarrow (N,h) $ is stable. 
\end{corollary}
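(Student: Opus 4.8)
The plan is to show $I(\upsilon,\upsilon)\geq 0$ directly from the second variation formula in Theorem \ref{12sd} by setting $\omega=\upsilon$ and analyzing the three terms of $J_{\alpha}(\upsilon)$ separately. First I would write
\begin{equation}
I(\upsilon,\upsilon)=-\int_{M}h(J_{\alpha}(\upsilon),\upsilon)\,dV_{g},
\end{equation}
and substitute the expression for $J_{\alpha}(\upsilon)$ from \eqref{36}. The goal is to integrate by parts (via Green's theorem, as in the proof of Theorem \ref{12sd}) so that each term becomes a manifestly nonnegative integrand, possibly up to the curvature term whose sign is controlled by the hypothesis on $N$.

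The key steps, in order: First I would handle the curvature term. The factor $2\alpha(1+\mid d\psi\mid^{2})^{\alpha-1}>0$ multiplies $\mathrm{Tr}_{g}R^{N}(\upsilon,d\psi)d\psi$, and pairing with $\upsilon$ gives the sectional-curvature-type quantity $\sum_{i}h(R^{N}(\upsilon,d\psi(e_{i}))d\psi(e_{i}),\upsilon)$; since $N$ has non-positive Riemannian curvature this quantity is $\leq 0$, so after the overall minus sign this term contributes a nonnegative amount to $I(\upsilon,\upsilon)$. Second, I would treat the third term $2\alpha\,\mathrm{Tr}_{g}\nabla(1+\mid d\psi\mid^{2})^{\alpha-1}\nabla\upsilon$: integrating by parts moves one $\nabla$ onto $\upsilon$, yielding $\int_{M}2\alpha(1+\mid d\psi\mid^{2})^{\alpha-1}\mid\nabla\upsilon\mid^{2}\,dV_{g}\geq 0$, using that the positive scalar factor stays outside the norm. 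Third, I would address the middle term $4\alpha(\alpha-1)\mathrm{Tr}_{g}\nabla\langle\nabla\upsilon,d\psi\rangle(1+\mid d\psi\mid^{2})^{\alpha-2}d\psi$: integrating by parts converts its contribution into $\int_{M}4\alpha(\alpha-1)(1+\mid d\psi\mid^{2})^{\alpha-2}\langle\nabla\upsilon,d\psi\rangle^{2}\,dV_{g}$, which is nonnegative precisely because $\alpha>1$ forces $\alpha-1>0$ and the remaining factors are squares or positive powers.

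The main obstacle will be the careful bookkeeping of the integration by parts for the middle term, ensuring that the derivative $\nabla$ acting on the product $\langle\nabla\upsilon,d\psi\rangle(1+\mid d\psi\mid^{2})^{\alpha-2}d\psi$ recombines cleanly into the single square $\langle\nabla\upsilon,d\psi\rangle^{2}$ without leftover cross terms; the compact support of $\upsilon$ guarantees no boundary contributions. Once all three pieces are shown to be nonnegative, they sum to give $I(\upsilon,\upsilon)\geq 0$, and the compact support assumption in Definition \ref{35} is exactly what legitimizes discarding the divergence terms. I would close by remarking that the sign condition $\alpha>1$ together with non-positive curvature of $N$ is what makes every term cooperate, which is the crux of the stability conclusion.
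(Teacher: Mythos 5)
Your proposal is correct and takes essentially the same approach as the paper: the paper also integrates by parts (writing the relevant terms as divergences of explicit 1-forms $\delta$ and $\eta$ and applying the divergence theorem, with compact support killing boundary terms) to arrive at
\begin{align}
I(\upsilon,\upsilon)&=\int_{M}\bigl\{4\alpha(\alpha-1)(1+\mid d\psi\mid^{2})^{\alpha-2}\langle\nabla\upsilon,d\psi\rangle^{2}\nonumber\\
&\quad-2\alpha(1+\mid d\psi\mid^{2})^{\alpha-1}h(Tr_{g}R^{N}(\upsilon,d\psi)d\psi,\upsilon)\nonumber\\
&\quad+2\alpha(1+\mid d\psi\mid^{2})^{\alpha-1}\mid\nabla\upsilon\mid^{2}\bigr\}\,dV_{g},
\end{align}
and then concludes nonnegativity term by term exactly as you do, using $\alpha>1$ and the non-positive curvature hypothesis.
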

\begin{proof}
Setting 
\begin{equation}
\delta(X)=2\alpha (1+\mid d\psi \mid ^{2})^{\alpha-1}h(\nabla _{X}\upsilon, \omega), 
\end{equation} 
and 
\begin{align}\label{ui}
 \qquad \eta (X)&=4\alpha(\alpha-1) (1+\mid d\psi \mid ^{2})^{\alpha-2}\nonumber \\& \qquad\langle \nabla\upsilon , d\psi \rangle h(d\psi(X),\omega),
\end{align}
for any $X\in \chi(M)$. Then, we have
\begin{align}\label{37}
&-h(2\alpha Tr_{g}\nabla (1+\mid d\psi \mid ^{2})^{\alpha-1}\nabla \upsilon,\omega)
\nonumber \\&=-\sum _{i=1}^{m}h(2\alpha\nabla_{e_{i}}(1+\mid d\psi \mid ^{2})^{\alpha-1}\nabla_{e_{i}}\upsilon,\omega)
\nonumber \\&=-\sum _{i=1}^{m}\{e_{i}(h(2\alpha(1+\mid d\psi \mid ^{2})^{\alpha-1}\nabla_{e_{i}}\upsilon,\omega))
\nonumber \\&+h(2\alpha(1+\mid d\psi \mid ^{2})^{\alpha-1}\nabla_{e_{i}}\upsilon,\nabla_{e_{i}}\omega)\}
\nonumber \\&=-div\delta+2\alpha(1+\mid d\psi \mid ^{2})^{\alpha-1}\langle \nabla \upsilon, \nabla \omega \rangle,
\end{align}
On the other hand, by \eqref{ui}, we get 
\begin{align}\label{38}
&-h( Tr_{g} \nabla \langle \nabla \upsilon, d\psi\rangle B d\psi, \omega)
\nonumber \\&=\sum _{i=1}^{m}\{-h( \nabla_{e_{i}} \langle \nabla \upsilon, d\psi\rangle  B d\psi(e_{i}), \omega)\}
\nonumber \\&=
\sum _{i=1}^{m}\{-e_{i}(h( \langle \nabla \upsilon, d\psi\rangle B d\psi(e_{i}), \omega))
\nonumber \\&+
h( \langle \nabla \upsilon, d\psi\rangle B d\psi(e_{i}), \nabla_{e_{i}}\omega)\}
\nonumber \\&=-div \,\eta+B\langle \nabla \upsilon, d\psi\rangle \langle \nabla \omega, d\psi\rangle, 
\end{align}
where $B=4\alpha(\alpha -1) (1+\mid d\psi \mid ^{2})^{\alpha-2}$. 
By substituting \eqref{37} and \eqref{38} in \eqref{36} and using Divergence theorem and Definition \ref{35}, we get 
\begin{align}\label{40}
&I(\upsilon,\upsilon)
\nonumber \\&=\int_{M}\{4\alpha(\alpha -1)(1+\mid d\psi \mid ^{2})^{\alpha-2}\langle \nabla \upsilon, d\psi\rangle ^{2}
\nonumber \\&
-2\alpha(1+\mid d\psi \mid ^{2})^{\alpha-1}h(Tr_{g} R^{N}(\upsilon, d\psi)d\psi, \upsilon )
\nonumber \\&
+2\alpha(1+\mid d\psi \mid ^{2})^{\alpha-1}\mid \nabla \upsilon \mid ^{2}
\}dV_{g}.
\end{align}
By \eqref{40} and the assumptions, the corollary \ref{45} follows. 
\end{proof}
Now, the stability of $\alpha-$harmonic maps from a compact without boundary Riemannian manifold to a standard unit sphere is studied. \\
Consider a unit standard sphere 
$\mathbb{S}^{n}$
as a submanifold in 
$(n+1)-$dimensional Euclidean space $\mathbb{R}^{n+1}$. Denote the Levi-Civita connections on 
$\mathbb{S}^{n}$ and 
$\mathbb{R}^{n+1}$
by $\nabla ^{S}$ and $\nabla ^{R}$, respectively. At $p\in \mathbb{S}^{n}$, any vector 
$V$ in $\mathbb{R}^{n+1}$ can be decomposed as follows 
\begin{equation}\label{s0}
V=V^{\top}+V^{\bot},
\end{equation}
where $V^{\bot}=\langle V, p\rangle p$ is the  normal part to 
$\mathbb{S}^{n}$
and 
$V^{\top}$ is the tangent part to 
$\mathbb{S}^{n}$.  
The second fundamental form of 
$\mathbb{S}^{n}$ 
in 
 $\mathbb{R}^{n+1}$
 is denoted by 
 $B$
and defined as follows 
\begin{equation}\label{s1}
B(X,Y)=-\langle X,Y\rangle p,
\end{equation}
where $X, Y$ are tangent vectors of 
$\mathbb{S}^{n}$  
at $p$ and 
$\langle ,  \rangle$ is the Euclidean metric on $\mathbb{R}^{n+1}$.  
 Furthermore, the shape operator 
$A^{W}$ corresponding to a normal vector field 
$W$
on $\mathbb{S}^{n}$ is defined by 
\begin{equation}\label{s2}
A^{W}(X)=-(\nabla ^{R}_{X}W)^{\top},
\end{equation}
where $X$ is a tangent vector field on 
$\mathbb{S}^{n}$. 
Noting that, the second fundamental form and the shape operator of $\mathbb{S}^{n}$ are satisfied by the following equation
\begin{align}\label{s3}
\langle B(X,Y), W\rangle &=\langle A^{W}(X),    Y\rangle\nonumber \\&=-\langle X,Y\rangle \langle p,W\rangle,
\end{align}
for any tangent vectors $X$ and $Y$ of $\mathbb{S}^{n}$ at $p$. \\
From the sectional curvature of $\mathbb{S}^{n}$ and using \eqref{s1},  \eqref{s2} and \eqref{s3}, the following lemma is obtained.  
\begin{lemma}\label{asdf}
Let 
$ \psi :(M,g)\longrightarrow \mathbb{S}^{n} $ be a smooth map and 
let $\Lambda$ is a parallel vector field in $\mathbb{R}^{n+1}$. Then, at $p\in \mathbb{S}^{n}$, we have 
\begin{itemize}
\item[1.] $\nabla _{X} \Lambda ^{\top}=A^{\Lambda ^{\bot}}(d\psi(X)),$
\item[2.] $\langle \nabla _{X} \Lambda ^{\top}, d\psi(X) \rangle =- \mid d\psi(X)\mid ^{2} \langle p, \Lambda \rangle,$
\item[3.] $\mid \nabla _{X} \Lambda ^{\top}\mid ^{2}=\mid d\psi(X)\mid ^{2}\langle p, \Lambda \rangle^{2},$
\item[4.] $\langle R^{S}(\Lambda ^{\top}, d\psi(X))d\psi(X), \Lambda ^{\top} \rangle\\=\mid d\psi(X)\mid ^{2}\mid \Lambda ^{\top}\mid ^{2}-\langle d\psi(X), \Lambda \rangle ^{2}.$
\end{itemize} 
where $\nabla $ and $R^{S}$ denote the induced connection on $\psi^{-1}T\mathbb{S}^{n}$ and the curvature tensor of    \,
$\mathbb{S}^{n}$, respectively.  
\end{lemma}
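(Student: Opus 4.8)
The plan is to exploit that $\Lambda$ is parallel in the flat ambient space $\mathbb{R}^{n+1}$, so that $\nabla^{R}_{d\psi(X)}\Lambda=0$, and to extract all four identities by repeatedly decomposing into tangential and normal parts along $\psi$ by means of the Gauss and Weingarten formulas, together with the umbilicity and constant curvature of $\mathbb{S}^{n}$.

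For item~1 I would start from $0=\nabla^{R}_{d\psi(X)}\Lambda=\nabla^{R}_{d\psi(X)}\Lambda^{\top}+\nabla^{R}_{d\psi(X)}\Lambda^{\bot}$ and take tangential parts. Since $\Lambda^{\top}$ is a section of $\psi^{-1}T\mathbb{S}^{n}$, the Gauss formula identifies the tangential component of $\nabla^{R}_{d\psi(X)}\Lambda^{\top}$ with the induced covariant derivative $\nabla_{X}\Lambda^{\top}$; the definition \eqref{s2} of the shape operator gives that the tangential component of $\nabla^{R}_{d\psi(X)}\Lambda^{\bot}$ equals $-A^{\Lambda^{\bot}}(d\psi(X))$. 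Taking the tangential part of $\nabla^{R}_{d\psi(X)}\Lambda=0$ then yields $\nabla_{X}\Lambda^{\top}=A^{\Lambda^{\bot}}(d\psi(X))$.

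For items~2 and 3 I would substitute item~1 and invoke the total umbilicity of $\mathbb{S}^{n}$. Because $\Lambda^{\bot}=\langle\Lambda,p\rangle p$, equation \eqref{s3} gives $\langle A^{\Lambda^{\bot}}(Y),Z\rangle=-\langle\Lambda,p\rangle\langle Y,Z\rangle$ for all tangent $Y,Z$, hence $A^{\Lambda^{\bot}}=-\langle\Lambda,p\rangle\,\mathrm{Id}$ and $\nabla_{X}\Lambda^{\top}=-\langle\Lambda,p\rangle\,d\psi(X)$. Pairing this with $d\psi(X)$ produces item~2, and taking its squared norm produces item~3. For item~4 I would use that $\mathbb{S}^{n}$ has constant sectional curvature $1$, so $R^{S}(U,V)W=\langle V,W\rangle U-\langle U,W\rangle V$; substituting $U=\Lambda^{\top}$, $V=W=d\psi(X)$ and pairing against $\Lambda^{\top}$ gives $|d\psi(X)|^{2}|\Lambda^{\top}|^{2}-\langle\Lambda^{\top},d\psi(X)\rangle^{2}$. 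Finally, since $d\psi(X)$ is tangent and $\Lambda^{\bot}$ is normal, $\langle\Lambda^{\top},d\psi(X)\rangle=\langle\Lambda,d\psi(X)\rangle$, which converts the expression into the stated right-hand side.

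The only delicate point is the bookkeeping of the tangential/normal splitting along the map $\psi$ rather than for honest vector fields on $\mathbb{S}^{n}$, and the observation that the apparent footpoint dependence of $\Lambda^{\bot}=\langle\Lambda,p\rangle p$ does not complicate the computation: umbilicity collapses $A^{\Lambda^{\bot}}$ to a scalar multiple of the identity, after which items~2--4 are immediate algebra. I expect no substantial obstacle beyond this verification.
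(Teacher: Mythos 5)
Your proposal is correct and follows essentially the same route as the paper's proof: parallelism of $\Lambda$ plus the Gauss/Weingarten splitting for item~1, the identity \eqref{s3} (which you phrase as umbilicity, $A^{\Lambda^{\bot}}=-\langle\Lambda,p\rangle\,\mathrm{Id}$) for items~2--3, and the constant-curvature form of $R^{S}$ together with $\langle\Lambda^{\top},d\psi(X)\rangle=\langle\Lambda,d\psi(X)\rangle$ for item~4. The only cosmetic difference is that the paper derives item~1 by the substitution $\Lambda^{\top}=\Lambda-\Lambda^{\bot}$ inside a chain of equalities rather than by taking tangential parts of the split equation, and it applies \eqref{s3} twice for item~3 instead of invoking the scalar form of the shape operator.
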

\begin{proof}
By \eqref{s0} and \eqref{s2}, we have 
\begin{align}\label{s4}
\nabla _{X} \Lambda ^{\top}&=\nabla _{d\psi(X)}^{S} \Lambda ^{\top}=(\nabla _{d\psi(X)}^{R} \Lambda ^{\top})^{\top}\nonumber \\&=(\nabla _{d\psi(X)}^{R} \Lambda- \Lambda ^{\bot})^{\top}=-(\nabla _{d\psi(X)}^{R} \Lambda ^{\bot})^{\top}
\nonumber \\&=
A^{\Lambda ^{\bot}}(d\psi(X)).
\end{align}
Making use of \eqref{s3} and \eqref{s4}, we get 
\begin{align}
\langle \nabla _{X} \Lambda ^{\top}, d\psi(X) \rangle &=\langle A^{\Lambda ^{\bot}}(d\psi(X)), d\psi(X) \rangle \nonumber \\&=- \mid d\psi(X)\mid ^{2}
\langle p, \Lambda ^{\bot} \rangle
\nonumber \\&=
- \mid d\psi(X)\mid ^{2} \langle p, \Lambda \rangle.
\end{align}
 and 
\begin{align}
\mid \nabla _{X} \Lambda ^{\top}\mid ^{2}&=\langle \nabla _{X} \Lambda ^{\top},  \nabla _{X} \Lambda ^{\top} \rangle
\nonumber \\&=\langle A^{\Lambda ^{\bot}}(d\psi(X)),  A^{\Lambda ^{\bot}}(d\psi(X)) \rangle
\nonumber \\&=
-\langle A^{\Lambda ^{\bot}}(d\psi(X)),  d\psi(X) \rangle \langle p,  \Lambda ^{\bot} \rangle\nonumber \\&=\langle d\psi(X),  d\psi(X) \rangle \langle p,  \Lambda ^{\bot} \rangle ^{2}
\nonumber \\&=
\mid d\psi(X)\mid ^{2}\langle p, \Lambda \rangle^{2}.
 \end{align} 
Due to the fact that  the sectional curvature of $\mathbb{S}^{n}$ is constant,  we get 
 \begin{align}
&\langle R^{S}(\Lambda ^{\top}, d\psi(X))d\psi(X), \Lambda ^{\top} \rangle
\nonumber \\&=\mid d\psi(X)\mid ^{2}\mid \Lambda ^{\top}\mid ^{2} -\langle d\psi(X), \Lambda^{\top} \rangle ^{2}
\nonumber \\&
=\mid d\psi(X)\mid ^{2}\mid \Lambda ^{\top}\mid ^{2} -\langle d\psi(X), \Lambda \rangle ^{2},
 \end{align}
 and hence completes the proof. 
\end{proof}
\begin{theorem}\label{900}
Let $M$ be a compact without boundary manifold and let
$ \psi :(M,g)\longrightarrow \mathbb{S}^{n} $ be an $\alpha-$harmonic map such that 
$\mid d\psi\mid ^{2} <\dfrac{n-2}{2\alpha -n}$. Then, 
$\psi$ is unstable.  
\end{theorem}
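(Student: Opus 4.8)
The plan is to adapt the classical averaging argument of Xin for harmonic maps into spheres to the $\alpha$-energy setting. Instability will follow from Definition \ref{35} once I exhibit a finite family of admissible variation fields whose second variations sum to a strictly negative number: then at least one member of the family has negative index, so $\psi$ cannot satisfy $I(\upsilon,\upsilon)\geq 0$ for all $\upsilon$. Concretely, I would take the standard orthonormal parallel frame $\{\Lambda_{a}\}_{a=1}^{n+1}$ of $\mathbb{R}^{n+1}$ (the constant coordinate vector fields) and, following Lemma \ref{asdf}, use their tangential projections $\upsilon_{a}:=\Lambda_{a}^{\top}\in\Gamma(\psi^{-1}T\mathbb{S}^{n})$ as test fields. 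Since $M$ is compact without boundary, each $\upsilon_{a}$ is an admissible (compactly supported) section along $\psi$.

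Next I would specialise the second variation formula \eqref{40} of Corollary \ref{45} to $N=\mathbb{S}^{n}$ and to $\upsilon=\omega=\upsilon_{a}$, so that
\begin{align*}
I(\upsilon_a,\upsilon_a)&=\int_M\Big\{4\alpha(\alpha-1)(1+\mid d\psi\mid^2)^{\alpha-2}\langle\nabla\upsilon_a,d\psi\rangle^2\\
&\quad-2\alpha(1+\mid d\psi\mid^2)^{\alpha-1}h(Tr_g R^{S}(\upsilon_a,d\psi)d\psi,\upsilon_a)\\
&\quad+2\alpha(1+\mid d\psi\mid^2)^{\alpha-1}\mid\nabla\upsilon_a\mid^2\Big\}dV_g.
\end{align*}
The point of Lemma \ref{asdf} is that it evaluates all three integrands pointwise for these particular fields: part~(2) gives $\langle\nabla\upsilon_a,d\psi\rangle=-\mid d\psi\mid^2\langle p,\Lambda_a\rangle$, part~(3) gives $\mid\nabla\upsilon_a\mid^2=\sum_i\mid d\psi(e_i)\mid^2\langle p,\Lambda_a\rangle^2$, and part~(4) gives $h(Tr_g R^{S}(\upsilon_a,d\psi)d\psi,\upsilon_a)=\sum_i\big(\mid d\psi(e_i)\mid^2\mid\Lambda_a^{\top}\mid^2-\langle d\psi(e_i),\Lambda_a\rangle^2\big)$.

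The decisive step is the sum over $a=1,\dots,n+1$. Here I would exploit that the scalar weights $(1+\mid d\psi\mid^2)^{\alpha-1}$ and $(1+\mid d\psi\mid^2)^{\alpha-2}$ do not depend on $a$, together with the parallel-frame identities $\sum_a\langle p,\Lambda_a\rangle^2=\mid p\mid^2=1$, $\sum_a\langle v,\Lambda_a\rangle^2=\mid v\mid^2$ for every $v\in T_p\mathbb{S}^n$, and $\sum_a\mid\Lambda_a^{\top}\mid^2=(n+1)-1=n$. These yield $\sum_a\langle\nabla\upsilon_a,d\psi\rangle^2=\mid d\psi\mid^4$, $\sum_a\mid\nabla\upsilon_a\mid^2=\mid d\psi\mid^2$, and $\sum_a h(Tr_g R^{S}(\upsilon_a,d\psi)d\psi,\upsilon_a)=(n-1)\mid d\psi\mid^2$. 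Substituting and factoring out $2\alpha(1+\mid d\psi\mid^2)^{\alpha-2}\mid d\psi\mid^2$ should collapse the three terms into
\begin{equation*}
\sum_{a=1}^{n+1}I(\upsilon_a,\upsilon_a)=\int_M 2\alpha(1+\mid d\psi\mid^2)^{\alpha-2}\mid d\psi\mid^2\big[(2-n)+(2\alpha-n)\mid d\psi\mid^2\big]dV_g.
\end{equation*}

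Finally I would read off the sign. The prefactor is nonnegative and strictly positive wherever $d\psi\neq0$, while under the hypothesis $\mid d\psi\mid^2<\tfrac{n-2}{2\alpha-n}$ (which forces $n>2$ and $2\alpha>n$) the bracket $(2-n)+(2\alpha-n)\mid d\psi\mid^2$ is strictly negative on the set where $d\psi\neq0$. Hence for a non-constant $\psi$ the integral is strictly negative, so $\sum_a I(\upsilon_a,\upsilon_a)<0$ and at least one $I(\upsilon_a,\upsilon_a)<0$, proving $\psi$ is unstable. I expect the only genuinely delicate point to be the bookkeeping in the summation: obtaining the curvature coefficient $(n-1)$ and then recombining the second and third terms into the factor $(2-n)$, since a single sign slip would spoil the final bracket. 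The algebra of merging $2(\alpha-1)\mid d\psi\mid^2$ with $(2-n)(1+\mid d\psi\mid^2)$ into $(2-n)+(2\alpha-n)\mid d\psi\mid^2$ is precisely what reproduces the stated threshold $\tfrac{n-2}{2\alpha-n}$.
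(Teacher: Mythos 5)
Your proposal is correct and follows essentially the same route as the paper's own proof: the same test fields $\Lambda_{a}^{\top}$ from a parallel orthonormal frame of $\mathbb{R}^{n+1}$, the same use of Lemma \ref{asdf} and of the second variation formula \eqref{40}, and the same summation identities yielding $\sum_{a}I(\Lambda_{a}^{\top},\Lambda_{a}^{\top})=\int_{M}2\alpha(1+\mid d\psi\mid^{2})^{\alpha-2}\mid d\psi\mid^{2}\big[(2-n)+(2\alpha-n)\mid d\psi\mid^{2}\big]dV_{g}<0$. In fact your bookkeeping is slightly cleaner than the paper's: your prefactor carries the correct exponent $\alpha-2$ (the paper's final display writes $\alpha-1$, a typo), and you explicitly note that non-constancy of $\psi$ is needed for strict negativity, which the paper leaves implicit.
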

\begin{proof}
Let 
$\{\Lambda_{k}\}_{k=1}^{n+1}$
be a parallel orthonormal frame field in 
$\mathbb{R}^{n+1}$
and 
$\{e_{i}\}_{i=1}^{m}$ be a local orthonormal frame field on 
$M$
and let 
$R^{\mathbb{S}}$
and 
$\nabla$
denote the curvature tensor of \,
$\mathbb{S}^{n}$
and the induced connection on 
$\psi^{-1}(T\mathbb{S}^{n})$, respectively.  
By \eqref{40}, we have
\begin{align}\label{5tzo}
&\sum_{k=1}^{n+1}I(\Lambda_{k}^{\top}, \Lambda_{k}^{\top})\nonumber \\
&=\int _{M} \{B\sum_{k=1}^{n+1}
 \sum_{i=1}^{m}\langle \nabla _{e_{i}} \Lambda ^{\top}_{k} , d\psi(e_{i}) \rangle ^{2}
 \nonumber \\  &+2\alpha (1+\mid d\psi \mid ^{2})^{\alpha-1}\sum_{k=1}^{n+1}
 \sum_{i=1}^{m}\{ \mid \nabla _{e_{i}} \Lambda ^{\top}_{k}\mid ^{2} \nonumber \\  &- \langle R^{S}(\Lambda ^{\top}_{k}, d\psi(e_{i}))d\psi(e_{i}), \Lambda ^{\top}_{k} \rangle\}\}dV_{g}, 
\end{align} 
where $B=4\alpha(\alpha-1)(1+\mid d\psi \mid ^{2})^{\alpha-2} $.
According to the Lemma \ref{asdf}'s second assertion,    
the first term of the right-hand side of \eqref{5tzo} can be obtained as follows
\begin{align}\label{g1}
&\sum_{k=1}^{n+1}
 \sum_{i=1}^{m}\langle \nabla _{e_{i}} \Lambda ^{\top}_{k} , d\psi(e_{i}) \rangle ^{2}
\nonumber \\
=& \sum_{k=1}^{n+1}
 \sum_{i=1}^{m}(-\mid d\psi(e_{i})\mid ^{2} \langle p, \Lambda_{k} \rangle)^{2}
 \nonumber \\
=& \sum_{k=1}^{n+1} \mid d\psi\mid ^{4}\langle p, \Lambda_{k} \rangle ^{2}
 \nonumber \\
=& \mid d\psi\mid ^{4}\mid p\mid ^{2}
 \nonumber \\
=& \mid d\psi\mid ^{4}.
\end{align}
By virtue of Lemma \ref{asdf}'s third claim,    
the second term of the right-hand side of \eqref{5tzo} can be calculated as follows
\begin{align}\label{g2}
&\sum_{k=1}^{n+1}
 \sum_{i=1}^{m}\mid \nabla _{e_{i}} \Lambda ^{\top}_{k}\mid ^{2}
 \nonumber \\
&
 =\sum_{k=1}^{n+1}
 \sum_{i=1}^{m} \mid d\psi(e_{i})\mid ^{2}\langle p, \Lambda_{k} \rangle^{2}
  \nonumber \\
&
 =\sum_{k=1}^{n+1}\mid d\psi\mid ^{2}\langle p, \Lambda_{k} \rangle^{2}
  \nonumber \\
&
 =\mid d\psi\mid ^{2}\mid p\mid ^{2}
   \nonumber \\
&
 =\mid d\psi\mid ^{2}
\end{align}
Since the sectional curvature of
$\mathbb{S}^{n+1}$ is constant and using the Lemma \ref{asdf}'s  fourth assertion,  the last term of the right-hand side of \eqref{5tzo} can be calculated as follows 
\begin{align}\label{g3}
&\sum_{k=1}^{n+1}
 \sum_{i=1}^{m} \langle R^{S}(\Lambda ^{\top}_{k}, d\psi(e_{i}))d\psi(e_{i}), \Lambda ^{\top}_{k} \rangle\nonumber \\
 &=\sum_{k=1}^{n+1}
 \sum_{i=1}^{m} 
\mid d\psi(e_{i})\mid ^{2}\mid \Lambda_{k} ^{\top}\mid ^{2}-\langle d\psi(e_{i}), \Lambda_{k} \rangle ^{2} 
\nonumber \\
 &=-\mid d\psi\mid ^{2}+\mid d\psi\mid ^{2}\sum_{k=1}^{n+1}\mid \Lambda_{k} - \Lambda_{k} ^{\bot}\mid ^{2}
 \nonumber \\
 &=-\mid d\psi\mid ^{2}+\mid d\psi\mid ^{2}\sum_{k=1}^{n+1}\mid \Lambda_{k} -\langle\Lambda_{k} ,p\rangle p \mid ^{2}
 \nonumber \\
 &=-\mid d\psi\mid ^{2}+\mid d\psi\mid ^{2}\sum_{k=1}^{n+1}(\mid \Lambda_{k} \mid^{2} -\langle\Lambda_{k} ,p\rangle ^{2})
  \nonumber \\
 &=-\mid d\psi\mid ^{2}+\mid d\psi\mid ^{2}\{(n+1)-\mid p \mid ^{2}\}
   \nonumber \\
 &=(n-1)\mid d\psi\mid ^{2}
\end{align}
By substituting 
\eqref{g1}, \eqref{g2} and  \eqref{g3} in 
\eqref{5tzo}, we get  
\begin{align}\label{5tzl}
&\sum_{k=1}^{n+1}I(\Lambda_{k}^{\top}, \Lambda_{k}^{\top})\nonumber \\
&=\int _{M} \{4\alpha(\alpha-1)(1+\mid d\psi \mid ^{2})^{\alpha-2} \mid d\psi\mid ^{4}\nonumber \\   &+2\alpha (1+\mid d\psi \mid ^{2})^{\alpha-1}(2-n) \mid d\psi\mid ^{2}\}dV_{g}
\nonumber \\
&=\int _{M} C\{(2-n)+(2\alpha-n)\mid d\psi\mid ^{2}\}dV_{g}
 \end{align}
where $C=2\alpha (1+\mid d\psi \mid ^{2})^{\alpha-1}\mid d\psi\mid ^{2}$. By \eqref{5tzl} and the assumptions, it is obtained that 
 \begin{equation}
 \sum_{k=1}^{n+1}I(\Lambda_{k}^{\top}, \Lambda_{k}^{\top})<0
 \end{equation}
Then, $\psi $ is unstable and hence completes the proof. 
\end{proof}

\section*{Conflicts of Interest}
The authors declare that there are no conflicts of interest regarding the publication of this paper.


\end{document}